\title{Minimization of energy per particle among Bravais lattices in $\R^2$ : Lennard-Jones and Thomas-Fermi cases}
\author{Laurent B\'{e}termin \and Peng Zhang}
\newtheorem{thm}{Theorem}[section]
\newtheorem{corollary}[thm]{Corollary}
\newtheorem{prop}[thm]{Proposition}
\theoremstyle{definition}
\newtheorem{remark}[thm]{Remark}
\newcommand{\R}{\mathbb R}
\newcommand{\Z}{\mathbb Z}
\newcommand{\N}{\mathbb N}
\numberwithin{equation}{section}
\begin{document}
\maketitle
\begin{abstract}
We prove in this paper that the minimizer of Lennard-Jones energy per particle among Bravais lattices is a triangular lattice, i.e. composed of equilateral triangles, in $\R^2$ for large density of points, while it is false for sufficiently small density. We show some characterization results for the global minimizer of this energy and finally we also prove that the minimizer of the Thomas-Fermi energy per particle in $\R^2$ among Bravais lattices with fixed density is triangular.
\end{abstract}
\noindent
\textbf{AMS Classification:} Primary 82B20  ; Secondary 52C15, 35Q40. \\
\textbf{Keywords:} Lattice energy ; Theta functions ; Triangular lattice ; Crystallization ; Lennard-Jones potential ; Thomas-Fermi model ; Bessel function.\\

\section{Introduction}
Understanding the structure of matter at low temperature has been a challenge for many years. In this case, one of the simplest models is to consider identical points as particles interacting in a Lennard-Jones potential. This model is deterministic, therefore we do not consider either entropy nor other quantum effects. The problem is to find the configuration of the points which minimize the total interaction energy, called the Lennard-Jones energy. Radin, in \cite{Rad1}, studied this problem in one dimension and showed that, in the case of infinite points, the minimizer is periodic. His method is not adaptable in higher dimensions and he studied, in \cite{Rad2,Rad3} the case of short range interactions and proved the first result of crystallization in two dimensions for a hard-sphere model. In the meantime, Ventevogel and Nijboer gave in \cite{VN1,VN2,VN3} more general results in one dimension for Lennard-Jones energy per particle. Indeed, they showed that a unique lattice of the form $a_0\N$ minimizes the Lennard-Jones energy and that all lattices $a\N$ with $a\leq a_0$ minimize this energy when the density of points $\rho=a^{-1}$ is fixed. Our paper gives some results in the spirit of the latter paper.\\ \\
After a numerical investigation of Yedder, Blanc, Le Bris, in \cite{YBLB}, about the minimization of the Lennard-Jones and the Thomas-Fermi energy in $\R^2$, it seemed that the triangular lattice, also called ``hexagonal lattice" -- which is composed of equilateral triangles -- is the minimum configuration for Lennard-Jones energy among any lattices and for Thomas-Fermi energy with nuclei density fixed. Some time after, Theil, in \cite{Crystal}, gave the first proof of crystallization in two dimensions for a ``Lennard-Jones like" potential, with a minimum less than one but very close to one and long range interaction. He showed that the global minimizer of the total energy is triangular. His method was adapted by E and Li, in \cite{ELi}, for a three-body potential with long range interactions in order to obtain a honeycomb lattice as global minimizer -- see also the works of Mainini, Piovano and Stefanelli in \cite{Stef1,Stef2} about the crystallization in square and honeycomb lattices for three-body potentials with short range interactions -- and by Theil and Flatley in three dimensions in \cite{TheilFlatley}.\\ \\
Furthermore Montgomery, in \cite{Mont}, proved that the triangular lattice is the unique minimizer of theta functions among Bravais lattices with fixed density and hence the  unique minimizer of the Epstein zeta function, thanks to the link between these two functions. As the Lennard-Jones potential is a linear sum of Epstein zeta functions, it is natural to study the problem of minimization of the Lennard-Jones energy among Bravais lattices with and without fixed density. However, there are few results about minimization in the general case of periodic systems. For example, Cohn and Kumar described in \cite{CohnKumar} a method and a conjecture for completely monotonic functions. It is interesting to observe that this kind of problem is connected with the theory of spherical design due to Delsarte, Goethals and Seidel in \cite{DelGoethSeid} and linked to the layers of a lattice, among others, by Venkov and Bachoc in \cite{Venkov1,BachocVenkov} and by Coulangeon et al. in \cite{Coulangeon:kx,Coulangeon:2010uq,CoulLazzarini}. \\ \\
    In this paper, our main results are :\\
    
    \noindent \textbf{Theorem:}
    \textit{ \begin{itemize} 
    \item Let $V_{LJ}(r)=r^{-12}-2r^{-6}$ be the Lennard-Jones potential, then the minimizer of the energy $\displaystyle E_{LJ}(L)=\sum_{x\in L\backslash\{0\}}V_{LJ}(\|x\|)$ among all Bravais  lattices of $\R^2$ with fixed density sufficiently large is triangular and unique, up to rotation.\\ 
    \item A minimizer of $E_{LJ}$ among all Bravais lattices with fixed density sufficiently small cannot be triangular.\\ 
    \item Let $W_{TF}:\R_+^* \to \R$ be the solution of $-\Delta h+\pi h=\delta_0$ which goes to $0$ at infinity, then the minimizer of the Thomas-Fermi energy $\displaystyle E_{TF}(L)=\displaystyle\sum_{x\in L\backslash\{0\}}W_{TF}(\|x\|)$ among all Bravais lattices of $\R^2$ with density fixed is triangular and unique, up to rotation. \\\end{itemize}}
    \noindent This paper is structured as follows : in Section 2, we introduce the notations; in Section 3, we show that the minimizer of the Lennard-Jones energy per particle among Bravais lattices with fixed density, if the density is sufficiently large, it is triangular and unique. Moreover we give numerical results and a conjecture for the minimization with density fixed and we have arguments in order to explain why the global minimizer, among Bravais lattices without fixed density, is triangular; in Section 4, we use proof of Blanc in \cite{BL1} to find a lower bound for the interparticle distance of the global minimizer, and finally in Section 5 we study the same kind of problem for the Thomas-Fermi model only when the density is fixed and we prove that the triangular lattice is the unique minimizer of the Thomas-Fermi energy per particle in $\R^2$.

\section{Preliminaries}
A Bravais lattice (also called a ``simple lattice") of $\R^2$ is given by $L=\Z u\oplus\Z v$ where $(u,v)$ is a basis of $\R^2$. By Engel's theorem (see \cite{Engel}), we can choose $u$ and $v$ so that $\|u\|\leq \|v\|$ and $\displaystyle (\widehat{u,v})\in\left[\frac{\pi}{3},\frac{\pi}{2}\right]$ in order to obtain the unicity of the lattice, up to a rotation. We note $|L|=\|u\land v\|=\|u\|\|v\|\left|\sin(\widehat{u,v})\right|$ the area of $L$ which is in fact the area of the lattice primitive cell and $L^*:=L\backslash \{0\}$. The positive definite quadratic form associated with the Bravais lattice $L$ is, for $(m,n)\in\Z^2$, 
$$
Q_L(m,n)=\|mu+nv\|^2=\|u\|^2m^2+\|v\|^2n^2+2\|u\|\|v\|\cos(\widehat{u,v})mn.
$$
For a positive definite quadratic form $q(m,n)=am^2+bmn+cn^2$, we define its discriminant $D=4ac-b^2\geq 0$. Hence for $Q_L$, we obtain : 
$$
 D=4\|u\|^2\|v\|^2-4\|u\|^2\|v\|^2\cos^2(\widehat{u,v})=4\|u\|^2\|v\|^2\sin^2(\widehat{u,v})=4|L|^2.
$$
In this paper, the term ``lattice" will mean a ``Bravais lattice", and we define, for $s>2$, the Epstein zeta function of the lattice $L$ by 
$$
\zeta_L(s):=\sum_{x\in L^*}\frac{1}{\|x\|^s}=\sum_{(m,n)\neq (0,0)}\frac{1}{Q_L(m,n)^{s/2}}.
$$
Let $\displaystyle\Lambda_A=\sqrt{\frac{2A}{\sqrt{3}}}\left[\Z (1,0) \oplus \Z (1/2,\sqrt{3}/2)\right]$ be the triangular lattice of area $A$, also called the hexagonal lattice. Its length is the norm of its vector $u$, i.e. the minimum distance strictly positive of $\Lambda_A$, $\|u\|=\sqrt{2A/\sqrt{3}}$. We notice, for any $s>2$, that
\begin{equation}\label{zetascale}
 \zeta_{\Lambda_A}(s)=\frac{\zeta_{\Lambda_1}(s)}{A^{s/2}}
 \end{equation}
 and this relation of scaling is true for any lattice $L$ of area $A$.\\
We recall the result of Montgomery about theta functions :
\begin{thm}\label{Mgt} \textnormal{(Montgomery, \cite{Mont})} For any real number $\alpha>0$ and a Bravais lattice $L$, let
$$
\theta_L(\alpha):=\Theta_L(i\alpha)=\sum_{m,n\in\Z}e^{-2\pi\alpha Q_L(m,n)},
$$
where $\Theta_L$ is the Jacobi theta function of the lattice $L$ defined for $Im(z)>0$. Then, for any $\alpha>0$, $\Lambda_A$ is the unique minimizer of $L\to \theta_L(\alpha)$ among lattices of area $A$, up to rotation.
\end{thm}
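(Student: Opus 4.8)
The plan is to turn the problem into a minimization over the modular fundamental domain and then to localize the minimum at the hexagonal point by a two-stage analysis, first in the real part and then in the imaginary part of the shape parameter.

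First I would fix the area $A$ and note that $\theta_L(\alpha)$ depends only on $\alpha A$ and on the shape of $L$. Up to rotation every lattice of area $A$ can be written as $L=\sqrt{A/y}\,(\Z\oplus z\Z)$ with $z=x+iy$ in the upper half-plane $\mathbb H$, so that $Q_L(m,n)=\frac{A}{y}\bigl((m+nx)^2+n^2y^2\bigr)$ and
$$\theta_L(\alpha)=\sum_{(m,n)\in\Z^2}e^{-\frac{2\pi\alpha A}{y}(m+nx)^2-2\pi\alpha A\,n^2y}.$$
Two parameters $z,z'$ give the same lattice, up to rotation, exactly when they lie in the same orbit of $\Gamma=PSL_2(\Z)$; since $\theta_L$ depends on $L$ alone it descends to a $\Gamma$-invariant function of $z$ --- invariance under $z\mapsto z+1$ is immediate, and invariance under $z\mapsto-1/z$ follows from the Jacobi transformation formula (Poisson summation). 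It therefore suffices to minimize over the standard fundamental domain $\mathcal D=\{z\in\mathbb H:|x|\le\frac12,\ |z|\ge1\}$, on which the triangular lattice $\Lambda_A$ is the corner $\rho=\frac12+i\frac{\sqrt3}{2}=e^{i\pi/3}$ and the square lattice is $z=i$.

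Next I would run the minimization on $\mathcal D$ in two steps. Poisson summation in $m$ for fixed $n$ gives
$$\theta_L(\alpha)=\sqrt{\frac{y}{2\alpha A}}\sum_{(k,n)\in\Z^2}e^{-\frac{\pi k^2y}{2\alpha A}-2\pi\alpha A\,n^2y}\,\cos(2\pi knx),$$
so the dependence on $x$ is carried only by cosines with strictly positive Gaussian weights. The first step is to show that for each fixed $y$ the right-hand side is minimized at $x=\frac12$, i.e. that among lattices of prescribed imaginary part the rhombic ones (those having a basis of two equal-length vectors) are optimal; the second step is to minimize the resulting one-variable function along the boundary arc $|z|=1$ of $\mathcal D$ and to prove that the minimum is attained at $y=\frac{\sqrt3}{2}$, hence at $\rho$. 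Note that $\rho$ is an elliptic fixed point of $\Gamma$ (together with the reflection $x\mapsto-x$), so it is automatically a critical point of $\theta$; likewise $z=i$ is forced to be critical. The real content is that $\rho$ is a global minimum while $z=i$ is only a saddle.

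The main obstacle is exactly this two-stage sign analysis, which must hold uniformly in $\alpha>0$. The $x$-minimization is delicate because the frequencies $kn$ overlap, so that the sign of $\cos(2\pi knx)$ at $x=\frac12$ alternates with the parity of $kn$; I would control it by grouping the terms according to the value of $kn$ and exploiting the complete monotonicity of the coefficients in $y$, which lets the dominant low-frequency contribution win. For the final one-dimensional minimization in $y$ I would use the Jacobi triple product to express the restricted function through the classical theta constants $\vartheta_2,\vartheta_3,\vartheta_4$ and reduce the claim to a single inequality whose equality case is $\rho$. Uniqueness up to rotation would then follow by checking that all inequalities are strict off the $\Gamma$-orbit of $\rho$.
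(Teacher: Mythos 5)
The paper offers no proof of this statement: it is imported wholesale from Montgomery \cite{Mont} and used as a black box (the paper's own contributions begin only where this theorem is applied). So your proposal has to be measured against Montgomery's original argument, and to your credit you have reconstructed its architecture faithfully: reduction modulo $PSL_2(\Z)$ to the standard fundamental domain, Poisson summation in $m$, monotonicity in $x$ toward the rhombic line $x=1/2$, then a one-variable minimization along the remaining boundary pinning the minimum at $e^{i\pi/3}$, with the elliptic points $i$ and $\rho$ correctly identified as automatic critical points so that the real content is a global sign analysis rather than a first-order computation.

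The genuine gap is that both decisive inequalities are announced but not carried out, and they are the entire substance of the theorem. For the $x$-step, your own formula gives $\partial_x\theta\propto-\sum_{k,n\geq1}kn\,e^{-\pi k^2y/(2\alpha A)}e^{-2\pi\alpha A n^2y}\sin(2\pi knx)$. The $n=1$ block is indeed nonnegative on $0<x<1/2$, since $\sum_{k}e^{-\beta k^2}\cos(k\theta)$ is a periodized Gaussian, decreasing on $(0,\pi)$; but for $n\geq2$ the arguments $2\pi knx$ sweep past $\pi$ and those blocks genuinely change sign, so ``grouping by $kn$ and letting the low-frequency contribution win'' is not an argument but a restatement of the problem. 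What is needed is a quantitative domination of the $n\geq2$ tails by the $n=1$ block, uniformly in $\alpha>0$ and $y\geq\sqrt{3}/2$ --- this is precisely Montgomery's key lemma, and ``complete monotonicity of the coefficients in $y$'' does not deliver it. The $y$-step is similarly unexecuted: on $x=1/2$ the sum does split by parity of $n$ into theta-constant products, so the triple-product idea is plausible, but ``reduce the claim to a single inequality whose equality case is $\rho$'' names no inequality; one must actually prove $\partial_y\theta>0$ along $x=1/2$ for $y>\sqrt{3}/2$ (equivalently, strict monotonicity along the arc after applying $z\mapsto-1/z$), again uniformly in $\alpha$. Finally, the required uniformity in $\alpha$ should be dispatched explicitly via the functional equation $\theta_L(1/\alpha)=\alpha\theta_L(\alpha)$ (stated in the paper for the unimodular normalization), reducing to $\alpha$ bounded below; your plan contains this implicitly through modular invariance but never says so. As submitted, then, this is a correct road map of Montgomery's proof with its two load-bearing lemmas left unproved, not a proof.
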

\begin{remark} The same kind of results were obtained by Nonnenmacher and Voros in \cite{NonnenVoros}. The previous theorem implies that the triangular lattice is the unique minimizer, up to rotation, of $L\mapsto\zeta_L(s)$ among lattices with density fixed for any $s>2$ which is also proved by Rankin (in \cite{Rankin}).
\end{remark} 
\noindent We consider the classical Lennard-Jones potential 
$$V_{LJ}(r)=\frac{1}{r^{12}}-\frac{2}{r^6}
$$
whose minimum is obtained at $r=1$, and for $L=\Z u\oplus\Z v$ a Bravais lattice of $\R^2$, we let 
$$
E_{LJ}(L):=\sum_{x\in L^*}V_{LJ}(\|x\|)=\zeta_L(12)-2\zeta_L(6)
$$
be  the Lennard-Jones energy of lattice $L$. By \eqref{zetascale} this energy among lattices of area $A$ can be viewed as energy $L\mapsto E_{LJ}(\sqrt{A}L)$ over lattices of area $1$ and we parametrize $L$ with its length $\|u\|$ and $\|v\|$ by
$$
Q_L(m,n)=\|u\|^2m^2+\|v\|^2n^2+2mn\sqrt{\|u\|^2\|v\|^2-1}.
$$
It follows that we can write Lennard-Jones energy among lattices of area $A$ as
\begin{equation} \label{parametrize}
(\|u\|,\|v\|)\mapsto \sum_{(m,n)\neq (0,0)}V_{LJ}\left(\sqrt{A}\sqrt{\|u\|^2m^2+\|v\|^2n^2+2mn\sqrt{\|u\|^2\|v\|^2-1}} \right).
\end{equation}
\begin{center}
\includegraphics[width=8cm,height=60mm]{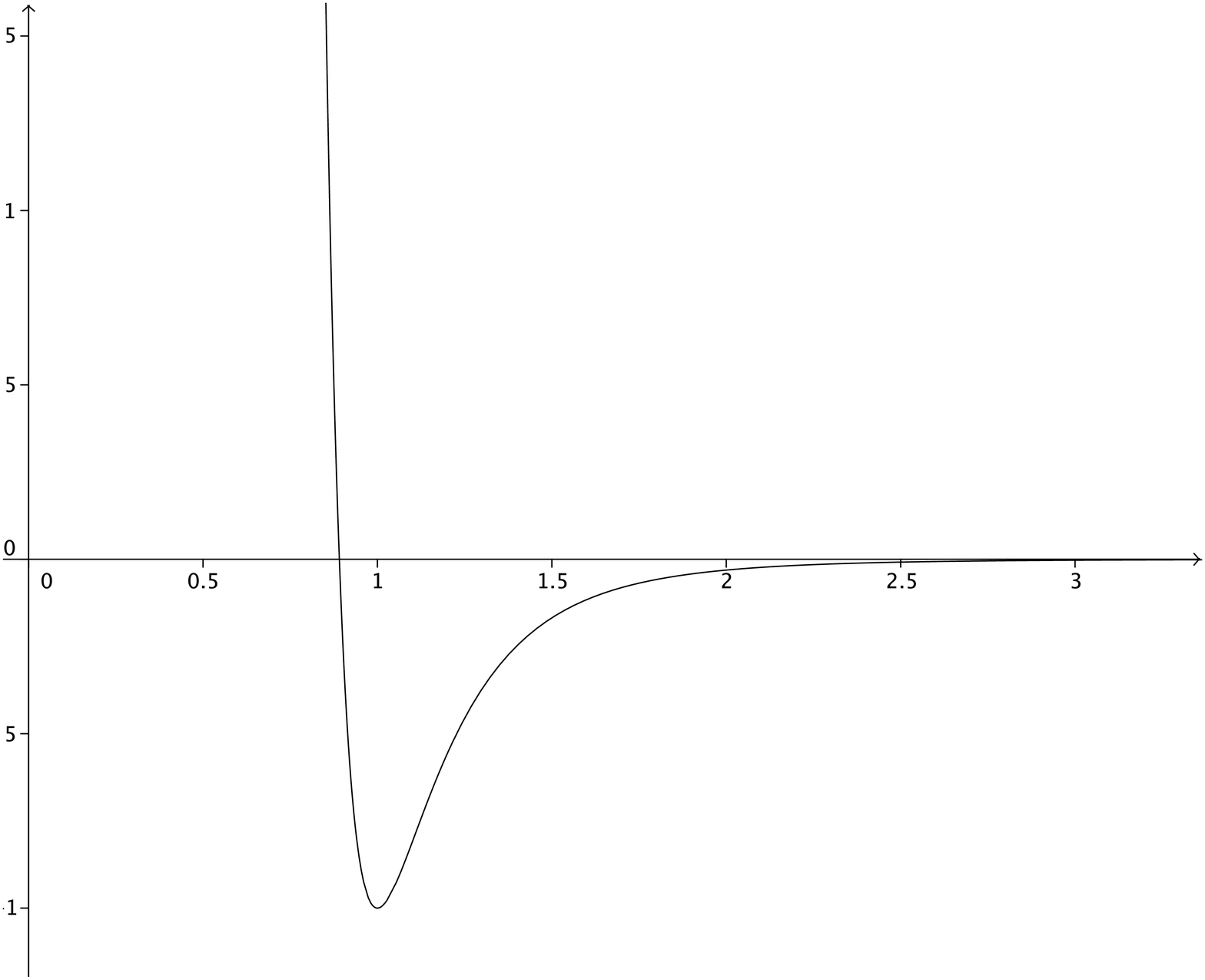}\\
\textbf{Fig. 1:} \textit{Graph of the Lennard-Jones potential $V_{LJ}$}
\end{center}
The aim of this paper is to study the following two minimization problems, up to rotation :
\begin{align*}
&(P_A):\text{ Find the minimizer of $E_{LJ}$ among lattices $L$ with fixed $|L|=A$};\\
&(P):\text{ Find the minimizer of $E_{LJ}$ among lattices}.
\end{align*}

\begin{prop} The minimum of $E_{LJ}$ among lattices is achieved.
\end{prop}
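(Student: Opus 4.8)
The plan is to apply the direct method of the calculus of variations. Using Engel's theorem I parametrize a lattice by a reduced basis, writing $a=\|u\|$, $b=\|v\|$ and $\theta=(\widehat{u,v})$, so that the admissible parameters range over the set $\mathcal{D}=\{(a,b,\theta):0<a\le b,\ \pi/3\le\theta\le\pi/2\}$ and
$$
Q_L(m,n)=a^2m^2+b^2n^2+2ab\cos\theta\,mn .
$$
On $\mathcal{D}$ the map $(a,b,\theta)\mapsto E_{LJ}(L)$ is continuous, since the defining series converges locally uniformly (its tail is controlled by $\sum_{(m,n)\neq(0,0)}Q_L(m,n)^{-3}$). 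I would then take a minimizing sequence $(a_k,b_k,\theta_k)$ and show it stays inside a compact subset of $\mathcal{D}$; continuity then produces a minimizer as the limit of a convergent subsequence.

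The coercivity estimates rest on one elementary inequality. Since $\theta\in[\pi/3,\pi/2]$ forces $0\le\cos\theta\le1/2$, the bound $2ab\cos\theta\,|mn|\le ab|mn|\le\tfrac12(a^2m^2+b^2n^2)$ yields, using $b\ge a$,
$$
Q_L(m,n)\ge\tfrac12\left(a^2m^2+b^2n^2\right)\ge\tfrac12\,a^2(m^2+n^2).
$$
Consequently $\zeta_L(6)\le 8\,a^{-6}\zeta_{\Z^2}(6)$, while keeping only the two points $\pm u$ gives $\zeta_L(12)\ge 2a^{-12}$. Hence $E_{LJ}(L)\ge 2a^{-12}-16\,a^{-6}\zeta_{\Z^2}(6)$, which tends to $+\infty$ as $a\to0$, so $a_k$ is bounded away from $0$. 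On the other hand $\zeta_L(12)\ge0$ gives $E_{LJ}(L)\ge -16\,a^{-6}\zeta_{\Z^2}(6)$, which tends to $0$ as $a\to\infty$; since an optimally scaled triangular lattice already has $E_{LJ}<0$, the infimum is negative and therefore $a_k$ is also bounded from above.

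It remains to bound $b_k$. With $a_k$ and $\theta_k$ confined to compact ranges (and $\theta_k$ bounded away from $0$), the only surviving degeneration is $b_k\to\infty$. In that limit every term with $n\neq0$ satisfies $\|mu+nv\|\ge b/\sqrt2\to\infty$ and hence vanishes, so that $E_{LJ}(L)\to f(a):=\sum_{m\neq0}V_{LJ}(|m|a)=2\zeta(12)a^{-12}-4\zeta(6)a^{-6}$, the one-dimensional Lennard-Jones energy of $a\Z$ (here $\zeta$ denotes the Riemann zeta function), whose infimum over $a$ equals $-2\zeta(6)^2/\zeta(12)$. The main obstacle is precisely to rule out this thin-lattice limit along a minimizing sequence, i.e. to prove the strict inequality $\inf_{L}E_{LJ}(L)<-2\zeta(6)^2/\zeta(12)$. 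I would establish it by exhibiting a single genuinely two-dimensional lattice whose energy lies below the one-dimensional floor, for instance an optimally scaled triangular lattice, for which $\inf_A E_{LJ}(\sqrt A\,\Lambda_1)=-\zeta_{\Lambda_1}(6)^2/\zeta_{\Lambda_1}(12)$, and then checking $\zeta_{\Lambda_1}(6)^2/\zeta_{\Lambda_1}(12)>2\zeta(6)^2/\zeta(12)$. Granting this, any sequence with $b_k\to\infty$ would have energy converging to a value at least $-2\zeta(6)^2/\zeta(12)$, strictly above the infimum, contradicting minimality; hence $b_k$ is bounded. The minimizing sequence is then trapped in a compact box $[a_{\min},a_{\max}]\times[a_{\min},b_{\max}]\times[\pi/3,\pi/2]$, and passing to a convergent subsequence and invoking continuity yields the desired minimizer.
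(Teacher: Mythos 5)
Your proof is correct, and while it shares the paper's overall skeleton (Engel reduction to $(a,b,\theta)$ with $\theta\in[\pi/3,\pi/2]$, continuity of the energy, compactness of a minimizing sequence), the way you justify compactness is genuinely different from --- and more self-contained than --- the paper's. The paper bounds $\|u\|$ from below by forwarding to Section 4, where Blanc's estimate is applied to lattices with energy below $E_{LJ}(\Lambda_{A_0})$, and it disposes of large $\|v\|$ with a one-line contraction argument (``if $y>1$, contracting the line $\R v$ decreases the energy''), a claim which is only valid where $V_{LJ}$ is increasing, i.e.\ for distances that stay $\geq 1$, and which the paper does not detail; the fixed-area case is dismissed as ``clear.'' You instead extract explicit coercivity from the reduced form: $Q_L(m,n)\geq\tfrac12(a^2m^2+b^2n^2)$ (using $\cos\theta\leq 1/2$, $b\geq a$), giving $\zeta_L(6)\leq 8a^{-6}\zeta_{\Z^2}(6)$ and hence blow-up as $a\to 0$ and energy tending to $0^-$ as $a\to\infty$, against a strictly negative infimum. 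The real added value is your treatment of the thin-lattice degeneration $b_k\to\infty$, which the paper's contraction claim is meant to exclude: you identify the limiting energy as the one-dimensional chain value $-2\zeta(6)^2/\zeta(12)\approx -2.069$ and beat it with the optimally scaled triangular lattice, whose energy $-\zeta_{\Lambda_1}(6)^2/\zeta_{\Lambda_1}(12)$ is exactly the paper's $-\zeta_{\Lambda_{A_0}}(6)\approx -6.764$, so your strict inequality holds with a wide margin (and can be made fully rigorous either by truncation-with-tail bounds on the lattice sums, or more elementarily by adding to the optimal chain $a^*\Z$ a parallel copy at distance large enough that every added pair distance exceeds $1$, so that all added terms of $V_{LJ}$ are negative). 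Two small caveats: the passage $E_{LJ}(L_k)\to f(a_*)$ as $b_k\to\infty$ needs the uniform tail bound your estimate $Q_L\geq\tfrac12 b^2$ for $n\neq 0$ indeed supplies, so make that explicit; and the paper's proposition also asserts existence for the fixed-area problem $(P_A)$, which you omit --- it is in fact easier in your framework, since $ab\sin\theta=A$ and $a\leq b$ force $a^2\leq 2A/\sqrt{3}$, so after your small-$a$ blow-up estimate the admissible set is already compact.
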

\begin{proof} We parametrize a lattice $L$ by $x=\|u\|$, $y=\|v\|$ and $\theta=(\widehat{u,v})$, therefore 
\begin{align*}
&f(x,y,\theta):=E_{LJ}(L)\\
&=\sum_{(m,n)\neq(0,0)}\left(\frac{1}{(x^2m^2+y^2n^2+2xymn\cos\theta)^6}-\frac{2}{(x^2m^2+y^2n^2+2xymn\cos\theta)^3}  \right).
\end{align*}
\textit{First case : minimization without fixed area.}
If $L$ is the solution of $(P)$ then $x$ and $y$ cannot be too small, otherwise the energy is too large and a proof of a lower bound for $x$ is given in Section 4. Moreover $y\leq 1$ because if $y>1$ then a contraction of the line $\R v$ gives smaller energy. Therefore we have $x,y\in [m,M]$ and $\theta\in [\pi/3,\pi/2]$. The function $(x,y,\theta)\mapsto f(x,y,\theta)$ is continuous on $[m,M]\times[m,M]\times [\pi/3,\pi/2]$ hence its minimum is achieved.\\
\textit{Second case : minimization with fixed area.} We can parametrize $L$ with only two variables $x$ and $y$ -- as in \eqref{parametrize} -- such that when $x\to 0$ then $y\to +\infty$. As $L$ should be a Bravais lattice, it is clear that the minimum of $f$ is achieved.
\end{proof}

\section{Minimization among lattices with fixed area}

\subsection{A sufficient condition for the minimality of $E_{LJ}$ : Montgomery's method}
Our idea is to write $E_{LJ}$ in terms of $\theta_L$ and to use Theorem \ref{Mgt} in order to find a sufficient condition for the minimality of the triangular lattice among Bravais lattices with a fixed area. 
\begin{thm}\label{ThM}
If  $\displaystyle A^3\leq \frac{\pi^3}{120}$, then $\Lambda_A$ is the unique solution of $(P_A)$.\\
\end{thm}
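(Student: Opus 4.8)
The plan is to run Montgomery's method: rewrite $E_{LJ}$ through the theta function, invoke Theorem~\ref{Mgt} (and its consequence, stated in the Remark, that $\Lambda_A$ minimizes every $\zeta_L(s)$, $s>2$, at fixed area), and reduce the minimality of $\Lambda_A$ to a single scalar inequality whose validity is governed by the position of $A$ relative to $\pi^3/120$. By the scaling relation \eqref{zetascale} I first normalize the area to $1$: writing $L=\sqrt{A}\,L_1$ with $|L_1|=1$, one has $E_{LJ}(L)=A^{-6}\zeta_{L_1}(12)-2A^{-3}\zeta_{L_1}(6)$, so $(P_A)$ becomes the minimization of $f_A(L_1):=A^{-6}\zeta_{L_1}(12)-2A^{-3}\zeta_{L_1}(6)$ over lattices of area $1$. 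Setting $\Delta_s:=\zeta_{L_1}(s)-\zeta_{\Lambda_1}(s)$, the Remark after Theorem~\ref{Mgt} gives $\Delta_6\ge0$ and $\Delta_{12}\ge0$, with equality only at $\Lambda_1$, so
\[
f_A(L_1)-f_A(\Lambda_1)=A^{-6}\Delta_{12}-2A^{-3}\Delta_6=A^{-3}\bigl(A^{-3}\Delta_{12}-2\Delta_6\bigr).
\]
Thus $\Lambda_1$ is the unique minimizer as soon as $A^{-3}\Delta_{12}\ge 2\Delta_6$ for every $L_1\neq\Lambda_1$. Since $\Delta_{12}\ge0$, the left side increases with $A^{-3}$, so it suffices to check the inequality at the smallest admissible value $A^{-3}=120/\pi^3$; that is, it is enough to prove $\Delta_6\le(60/\pi^3)\Delta_{12}$ for all area-$1$ lattices.

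Next I would insert the Mellin representation $\zeta_{L_1}(s)=\frac{(2\pi)^{s/2}}{\Gamma(s/2)}\int_0^\infty\alpha^{s/2-1}\bigl(\theta_{L_1}(\alpha)-1\bigr)\,d\alpha$, valid for $s>2$. Subtracting the triangular value and writing $D(\alpha):=\theta_{L_1}(\alpha)-\theta_{\Lambda_1}(\alpha)\ge0$ (Theorem~\ref{Mgt}), the target inequality $\Delta_6\le(60/\pi^3)\Delta_{12}$ becomes, after clearing the $\Gamma$ and $2\pi$ factors,
\[
\int_0^\infty\alpha^2\bigl(8\alpha^3-1\bigr)D(\alpha)\,d\alpha\ge0 .
\]
This is the crux. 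The kernel $\alpha^2(8\alpha^3-1)$ is negative on $(0,1/2)$ and positive on $(1/2,\infty)$, and the hypothesis $A^3=\pi^3/120$ is precisely what places the sign change at $\alpha=1/2$: for general $A$ the crossover sits at $\alpha_0=120^{1/3}A/(2\pi)$, and $\alpha_0\le1/2\iff A^3\le\pi^3/120$. Because $D\ge0$, positivity is immediate where the kernel is nonnegative, and the entire difficulty is to control the negative contribution from $(0,1/2)$.

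The reason $\alpha=1/2$ is the correct threshold is the functional equation coming from Poisson summation: for area $1$, $\theta_{L_1}(\alpha)=\frac{1}{2\alpha}\theta_{L_1^*}(1/(4\alpha))$, whose fixed point is exactly $\alpha=1/2$. Since the triangular lattice is self-dual (its dual is a rotated copy, so $\theta_{\Lambda_1^*}=\theta_{\Lambda_1}$), this yields $D(\alpha)=\frac{1}{2\alpha}D^*(1/(4\alpha))$, where $D^*:=\theta_{L_1^*}-\theta_{\Lambda_1}\ge0$ again by Theorem~\ref{Mgt}. Folding $(0,1/2)$ onto $(1/2,\infty)$ via $\alpha\mapsto1/(4\alpha)$ transforms the crux integral into
\[
\int_{1/2}^\infty\Bigl[\beta^2(8\beta^3-1)\,D(\beta)+\frac{1}{32\beta^3}\Bigl(\frac{1}{8\beta^3}-1\Bigr)D^*(\beta)\Bigr]\,d\beta,
\]
where on $(1/2,\infty)$ the first kernel is $\ge0$ and the second is $\le0$.

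The main obstacle is exactly this last step. After folding, the integrand is a difference of two nonnegative quantities, so the bare positivity of $D$ and $D^*$ supplied by Montgomery's theorem does not close the argument; one must show that the positive part dominates. This requires comparing the theta-difference of $L_1$ with that of its dual $L_1^*$, which is genuinely finer information than $D\ge0$: the lattices $L_1$ and $L_1^*$ are distinct and no pointwise domination holds in general, since the relative decay rates of $D(\beta)$ and $D^*(\beta)$ are controlled by the shortest vectors of $L_1$ and of $L_1^*$, which need not coincide. I expect the heart of the proof to be a quantitative estimate establishing this integrated dominance at the self-dual threshold. Once it is available, the chain above gives $f_A(L_1)\ge f_A(\Lambda_1)$ with equality only for $L_1=\Lambda_1$, and undoing the scaling shows that $\Lambda_A$ is the unique solution of $(P_A)$ whenever $A^3\le\pi^3/120$.
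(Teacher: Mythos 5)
Your reduction is correct as far as it goes: the scaling step, the observation that it suffices to treat the critical value $A^{-3}=120/\pi^3$, the Mellin representation, the crux integral $\int_0^\infty\alpha^2(8\alpha^3-1)D(\alpha)\,d\alpha\ge0$, and the folding computation via $\alpha\mapsto 1/(4\alpha)$ are all accurate, and they retrace the paper's route (the paper performs the same fold inside its ``Riemann trick'' \eqref{Riemann} before comparing with $\Lambda_A$). The genuine gap is the step you declare to be the unproven ``heart of the proof'': it is in fact immediate, because of a fact you half-state and wrongly attribute to the triangular lattice alone. In dimension two, \emph{every} unimodular Bravais lattice is isometric to its dual: if $L=\Z u\oplus\Z v$ with $|L|=1$ and $J$ denotes rotation by $\pi/2$, the dual basis is $u^*=-Jv$, $v^*=Ju$, so $L^*=J(L)$ and $\theta_{L^*}=\theta_L$ identically. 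This is exactly the content of Montgomery's identity $\theta_L(1/\alpha)=\alpha\,\theta_L(\alpha)$ (in the discriminant-one normalization), which the paper quotes and on which its derivation of \eqref{Riemann} rests, and it holds for all $L$, not only for $\Lambda_1$. Consequently $D^*=D$ in your folded integral; no comparison between $D$ and $D^*$ is needed, and your concern about the shortest vectors of $L_1$ and $L_1^*$ differing is vacuous, since $L_1$ and $L_1^*$ have the same length spectrum.

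With $D^*=D$, your folded expression becomes $\int_{1/2}^\infty h(\beta)D(\beta)\,d\beta$ with $h(\beta)=\beta^2(8\beta^3-1)+\frac{1}{32\beta^3}\bigl(\frac{1}{8\beta^3}-1\bigr)$, and positivity is a one-line factorization:
\[
256\,\beta^6\,h(\beta)=2048\beta^{11}-256\beta^8-8\beta^3+1=\bigl(8\beta^3-1\bigr)\bigl(256\beta^8-1\bigr)\ge 0\qquad(\beta\ge 1/2),
\]
both factors vanishing exactly at $\beta=1/2$ and being nonnegative beyond it. Since Theorem \ref{Mgt} gives $D(\beta)>0$ for \emph{every} $\beta>0$ whenever $L_1\neq\Lambda_1$, the integral is strictly positive, which yields minimality and uniqueness at the critical area; your monotonicity-in-$A^{-3}$ observation then extends this to all $A^3\le\pi^3/120$. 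Once the self-duality is inserted, your argument coincides with the paper's: under $\beta=\alpha/2$ your kernel is the paper's $g_A$ at the critical area, which factors the same way, $\alpha^6+\alpha^{-5}-\alpha^3-\alpha^{-2}=\alpha^{-5}(\alpha^3-1)(\alpha^8-1)$; the paper instead verifies nonnegativity of $g_A$ on $[1,\infty)$ for all admissible $A$ by checking $g_A(1)\ge0$, $g_A'(1)\ge0$ and $g_A''\ge0$.
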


\begin{proof} As it is explained in \cite{Mont} or \cite{Terras}, we can write the Epstein zeta function in terms of a theta function. Indeed, we have the following identity, where the discriminant of $Q_L$ is $D=1$ :
\begin{equation} \label{Riemann}
\text{for } \textnormal{Re}(s)>1, \zeta_L(2s)\Gamma(s)(2\pi)^{-s}=\frac{1}{s-1}-\frac{1}{s}+\int_1^\infty(\theta_L(\alpha)-1)(\alpha^s+\alpha^{1-s})\frac{d\alpha}{\alpha}.
\end{equation}
Thus, for $|L|=A$, we write $E_{LJ}(L)=\zeta_L(12)-2\zeta_L(6)$ as an integral $\displaystyle \int_1^{+\infty}g_A(\alpha)\left(\theta_L\left(\frac{\alpha}{2A}\right)-1\right)\frac{d\alpha}{\alpha}$, up to a constant independent of $L$ and we find $A$ so that $g_A(\alpha)\geq 0$ for any $\alpha\geq1$. As $\Lambda_A$ is the unique minimizer of $\theta_L(\alpha)$ for any $\alpha>0$, we have for any $L$ such that $|L|=A$ :
$$
E_{LJ}(L)-E_{LJ}(\Lambda_A)=\int_1^{+\infty}\left(\theta_L\left(\frac{\alpha}{2A}\right)-\theta_{\Lambda_A}\left(\frac{\alpha}{2A}\right)\right)g_A(\alpha)\frac{d\alpha}{\alpha}\geq 0
$$
and $\Lambda_A$ is the unique solution of $(P_A)$.\\
In fact \eqref{Riemann} it is the classic ``Riemann's trick" and here we will briefly recall its proof : as 
$$ \Gamma(s)(2\pi)^{-s}Q_L(m,n)^{-s}=\int_0^\infty t^{s-1}e^{-t}(2\pi)^{-s}Q_L(m,n)^{-s}dt
$$
for $\text{Re}(s)>1$, and by putting $t=2\pi Q_L(m,n)y$, we obtain 
$$
\Gamma(s)(2\pi)^{-s}Q_L(m,n)^{-s}=\int_0^\infty e^{-2\pi y Q_L(m,n)}y^{s-1}dy.
$$
Summing over $(m,n)\neq (0,0)$ and using the identity $\theta_L(1/\alpha)=\alpha\theta_L(\alpha)$ for any $\alpha>0$, proved by Montgomery in \cite{Mont}, we obtain 
\begin{align*}
\Gamma(s)(2\pi)^{-s}\zeta_L(2s)&=\int_0^\infty(\theta_L(y)-1)y^{s-1}dy =\int_0^1(\theta_L(y)-1)y^{s-1}dy+\int_1^\infty(\theta_L(y)-1)y^{s-1}dy\\
&=\int_1^\infty(\theta_L(1/\alpha)-1)\alpha^{-1-s}d\alpha+\int_1^\infty(\theta_L(\alpha)-1)\alpha^{s-1}d\alpha\\
&=\int_1^\infty(\alpha\theta_L(\alpha)-1)\alpha^{-1-s}d\alpha+\int_1^\infty(\theta_L(\alpha)-1)\alpha^{s-1}d\alpha\\
&=\int_1^\infty\theta_L(\alpha)\alpha^{-s}d\alpha-\int_1^\infty\alpha^{-1-s}d\alpha+\int_1^\infty(\theta_L(\alpha)-1)\alpha^{s-1}d\alpha\\
&=\int_1^\infty(\theta_L(\alpha)-1)\alpha^{-s}d\alpha+\int_1^\infty(\theta_L(\alpha)-1)\alpha^{s-1}d\alpha+ \int_1^\infty \alpha^{-s}d\alpha-\int_1^\infty\alpha^{-1-s}d\alpha\\
&=\int_1^\infty(\theta_L(\alpha)-1)\alpha^{-s}d\alpha+\int_1^\infty(\theta_L(\alpha)-1)\alpha^{s-1}d\alpha+\frac{1}{s-1}-\frac{1}{s}\\
&=\int_1^\infty(\theta_L(\alpha)-1)(\alpha^s+\alpha^{1-s})\frac{d\alpha}{\alpha}+\frac{1}{s-1}-\frac{1}{s}.
\end{align*}
Now if $|L|=A$, by the equality $D=(2A)^2$ there are two identities :
$$
(2\pi)^{-6}(2A)^6\Gamma(6)\zeta_L(12)=\frac{1}{5}-\frac{1}{6}+\int_1^{+\infty}\left(\theta_L\left(\frac{\alpha}{2A}\right)-1\right)(\alpha^6+\alpha^{1-6})\frac{d\alpha}{\alpha}
$$

$$
(2\pi)^{-3}(2A)^3\Gamma(3)\zeta_L(6)=\frac{1}{2}-\frac{1}{3}+\int_1^{+\infty}\left(\theta_L\left(\frac{\alpha}{2A}\right)-1\right)(\alpha^3+\alpha^{1-3})\frac{d\alpha}{\alpha}
$$
and we find
$$
\zeta_L(12)=\frac{(2\pi)^6}{30(2A)^65!}+\int_1^{+\infty}\left(\theta_L\left(\frac{\alpha}{2A}\right)-1\right)\frac{(2\pi)^6}{(2A)^65!}(\alpha^6+\alpha^{-5})\frac{d\alpha}{\alpha}
$$
$$
\zeta_L(6)=\frac{(2\pi)^3}{6(2A)^32!}+\int_1^{+\infty}\left(\theta_L\left(\frac{\alpha}{2A}\right)-1\right)\frac{(2\pi)^3}{(2A)^32!}(\alpha^3+\alpha^{-2})\frac{d\alpha}{\alpha}.
$$
Therefore, for any $L$ of area $A$,
$$
E_{LJ}(L)=C_{A}+\frac{\pi^3}{A^3}\int_1^{+\infty}\left(\theta_L\left(\frac{\alpha}{2A}\right)-1\right)g_{A}(\alpha)\frac{d\alpha}{\alpha}
$$
where $\displaystyle g_{A}(\alpha):=\frac{\pi^3}{A^35!}(\alpha^6+\alpha^{-5})-(\alpha^3+\alpha^{-2})$, and $C_{A}$ is a constant depending on $A$ but independent of $L$. Now we want to prove that if $\pi^3\geq 120 A^3$ then $g_A(\alpha)\geq 0$ for any $\alpha\geq 1$. First, we remark that 
$$ 
g_{A}(1)\geq 0 \iff \frac{\pi^3}{A^35!}-1\geq 0 \iff  \pi^3\geq 120 A^3.
$$
Secondly, we compute $\displaystyle g_{A}'(\alpha)=\frac{\pi^3}{A^35!}(6\alpha^5-5\alpha^{-6})-(3\alpha^2-2\alpha^{-3})$,
and if $\displaystyle \pi^3\geq 120 A^3$ then 
$$
g_{A}'(1)=\frac{\pi^3}{A^35!}-1\geq 0.
$$
Finally, we compute $\displaystyle g_{A}''(\alpha)= \frac{\pi^3}{A^35!}(30\alpha^4+30\alpha^{-7})-(6\alpha+6\alpha^{-4})$.
As $\displaystyle \frac{\pi^3}{A^35!}\geq1$ and $\alpha\geq 1$, 
$$
 \frac{\pi^3}{A^35!}(30\alpha^4+30\alpha^{-7})-(6\alpha+6\alpha^{-4})\geq 30\alpha^4+30\alpha^{-7}-6\alpha-6\alpha^{-4}\geq 24\alpha +30\alpha^{-7}-6\alpha^{-4}\geq 0.
 $$
Thus, we have shown that, for any $A$ so that $\pi^3\geq 120 A^3$, $g_{A}''(\alpha)\geq 0$ for any $\alpha\geq 1$, $g_{A}'(1)\geq 0$ and $g_{A}(1)\geq 0$. Hence $g_{A}(\alpha)\geq 0$ for any $\alpha\geq 1$ if $\pi^3\geq 120 A^3$.
\end{proof}
\begin{remark} We have $\displaystyle \left( \frac{\pi^3}{120} \right)^{1/3}\approx 0.63693$, hence for $A\leq 0.63692$, $\Lambda_A$ is the unique solution of $(P_A)$.
\end{remark}
\begin{remark} We prove below (see Proposition \ref{UpB})  that when $A$ is sufficiently large then $\Lambda_A$ is no longer a solution of $(P_A)$. However, our bound $\pi^3\geq 120 A^3$ is likely not to be optimal. If it were, by the Proposition \ref{tri} and its remark, then the triangular lattice is not the solution to $(P)$.
\end{remark}

\noindent This result explains that the behaviour of the potential is important for the interaction between the first neighbours because in this case the reverse power part $\displaystyle r^{-12}$ is the strongest interaction. This method can be adapted to any potential of the form $\displaystyle V(r)=\frac{K_1}{r^n}-\frac{K_2}{r^p}$ with $n>p>2$ to obtain similar results in two dimensions.\\ 
\begin{remark} \textbf{The three-dimensional case} is an open problem. Indeed, there is no result related to the minimization of theta and Epstein functions among Bravais lattices of $\R^3$ with fixed volume. Sarnak and Str\"ombergsson recalled in \cite{SarStromb} that Ennola had shown in \cite{Ennola} the local minimality of the face centred cubic lattice for $\zeta_L(s)$ and for any $s>0$. They also prove that the face centred cubic lattice cannot be the minimizer of $\zeta_L(s)$ for all $s>0$. Hence the problem of minimization of Lennard-Jones energy among lattices of $\R^3$, and of course in higher dimensions, seems to be very difficult.
\end{remark}

\subsection{A necessary condition for the minimality of the triangular lattice for $E_{LJ}$}

\begin{prop}\label{UpB}
$\Lambda_A$ is a solution of $(P_A)$ if and only if $\displaystyle A\leq \inf_{|L|=1 \atop L\neq \Lambda_1}\left( \frac{\zeta_{L}(12)-\zeta_{\Lambda_1}(12)}{2(\zeta_{L}(6)-\zeta_{\Lambda_1}(6))} \right)^{1/3}$.\\
Hence if $A$ is sufficiently large, $\Lambda_A$ is not a solution of $(P_A)$.
\end{prop}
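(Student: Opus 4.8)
The plan is to reduce the problem to lattices of area one by exploiting the scaling relation \eqref{zetascale}, which for any lattice $L$ of area $1$ reads $\zeta_{\sqrt{A}L}(s)=A^{-s/2}\zeta_L(s)$. Since every lattice of area $A$ is of the form $\sqrt{A}L$ for a unique area-one lattice $L$, and $\Lambda_A=\sqrt{A}\Lambda_1$, I would first record
$$E_{LJ}(\sqrt{A}L)=\frac{\zeta_L(12)}{A^6}-\frac{2\zeta_L(6)}{A^3}, \qquad E_{LJ}(\Lambda_A)=\frac{\zeta_{\Lambda_1}(12)}{A^6}-\frac{2\zeta_{\Lambda_1}(6)}{A^3}.$$

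Next I would translate the defining property of $(P_A)$. The lattice $\Lambda_A$ solves $(P_A)$ precisely when $E_{LJ}(\sqrt{A}L)\geq E_{LJ}(\Lambda_A)$ for every area-one lattice $L$. Subtracting the two displayed expressions and multiplying through by $A^6>0$, this is equivalent to
$$\zeta_L(12)-\zeta_{\Lambda_1}(12)\geq 2A^3\bigl(\zeta_L(6)-\zeta_{\Lambda_1}(6)\bigr) \quad\text{for all } L \text{ with } |L|=1.$$
Here the crucial input is Theorem \ref{Mgt} together with its remark: since $\Lambda_1$ is the \emph{unique} minimizer of $L\mapsto\zeta_L(s)$ among area-one lattices for every $s>2$, both $\zeta_L(12)-\zeta_{\Lambda_1}(12)$ and $\zeta_L(6)-\zeta_{\Lambda_1}(6)$ are strictly positive for every $L\neq\Lambda_1$. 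This positivity of the denominator lets me divide and rewrite the inequality as $A^3\leq \frac{\zeta_L(12)-\zeta_{\Lambda_1}(12)}{2(\zeta_L(6)-\zeta_{\Lambda_1}(6))}$, and since this must hold for all $L\neq\Lambda_1$ simultaneously (the case $L=\Lambda_1$ being the trivial identity $0\geq 0$), taking the infimum over such $L$ and using that $t\mapsto t^{1/3}$ is increasing, so that it commutes with the infimum, yields the stated equivalence.

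For the final assertion I would argue that the infimum $I$ on the right-hand side is finite. Indeed, evaluating the ratio at any single fixed lattice $L_0\neq\Lambda_1$ of area one, say $L_0=\Z^2$, gives a finite positive number by the strict positivity just noted, so $I$ is bounded above by its cube root. Consequently, for every $A>I$ the equivalence forces $\Lambda_A$ to fail to be a minimizer, which is the claim: ``$A$ sufficiently large'' means precisely $A>I$.

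I do not expect a genuine obstacle in this argument, as it is essentially an algebraic rearrangement once Montgomery's strict minimality is invoked. The only point requiring care is the sign of $\zeta_L(6)-\zeta_{\Lambda_1}(6)$, which must be strictly positive to justify the division and to fix the direction of the inequality; this is exactly where the uniqueness clause of Theorem \ref{Mgt} is indispensable. The genuinely hard problem, namely determining or sharply estimating the infimum $I$ and hence the exact threshold beyond which the triangular lattice loses optimality, lies outside what this qualitative statement requires.
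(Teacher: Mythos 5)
Your proof is correct and takes essentially the same route as the paper's: the same scaling reduction via \eqref{zetascale} to area-one lattices, the same chain of equivalent inequalities, and the same appeal to the strict positivity of $\zeta_L(6)-\zeta_{\Lambda_1}(6)$ for $L\neq\Lambda_1$ (which the paper derives from Theorem \ref{Mgt} together with the Riemann trick \eqref{Riemann}) to justify dividing and passing to the infimum. Your explicit verification that the infimum is finite, by evaluating the ratio at $\Z^2$, merely spells out a point the paper leaves implicit in its closing sentence.
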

\begin{proof} We have the following equivalences
\begin{align*}
&E_{LJ}(\Lambda_A)\leq E_{LJ}(L) \text{ for any $L$ such that } |L|=A\\
&\iff \zeta_{\Lambda_A}(12)-2\zeta_{\Lambda_A}(6)\leq \zeta_L(12)-2\zeta_L(6) \text{ for any $L$ such that } |L|=A\\
&\iff 2(\zeta_L(6)-\zeta_{\Lambda_A}(6))\leq \zeta_{L}(12)-\zeta_{\Lambda_A}(12) \text{ for any $L$ such that } |L|=A\\
&\iff \frac{2(\zeta_L(6)-\zeta_{\Lambda_1}(6))}{A^3}\leq \frac{\zeta_{L}(12)-\zeta_{\Lambda_1}(12)}{A^6}  \text{ for any $L$ such that } |L|=1
\end{align*}
by the scaling property \eqref{zetascale}. We recall that $\zeta_L(6)>\zeta_{\Lambda_1}(6)$ for any $L$ of area $A$ so that $L\neq \Lambda_1$, as a consequence of Theorem \ref{Mgt} and the Riemann's trick \eqref{Riemann}. Then we obtain
\begin{align*}
&E_{LJ}(\Lambda_A)\leq E_{LJ}(L) \text{ for any $L$ such that } |L|=A\\
&\iff A\leq \inf_{|L|=1 \atop L\neq \Lambda_1}\left( \frac{\zeta_{L}(12)-\zeta_{\Lambda_1}(12)}{2(\zeta_{L}(6)-\zeta_{\Lambda_1}(6))} \right)^{1/3}.
\end{align*}
\end{proof}
\noindent It is difficult to study the minimum of function $\displaystyle L\mapsto\left( \frac{\zeta_{L}(12)-\zeta_{\Lambda_1}(12)}{2(\zeta_{L}(6)-\zeta_{\Lambda_1}(6))}\right)^{1/3}$ among lattices $L\neq \Lambda_1$ such that $|L|=1$. However, we can numerically look for a lower bound. This function can be parametrized with two variables -- here the lengths $\|u\|$ and $\|v\|$ of the lattice $L$ as in \eqref{parametrize} -- and we can plot the level sets of it. We notice that the large differences between the values of the function only give a domain where the function is minimum.
\begin{center}
\includegraphics[width=8cm,height=70mm]{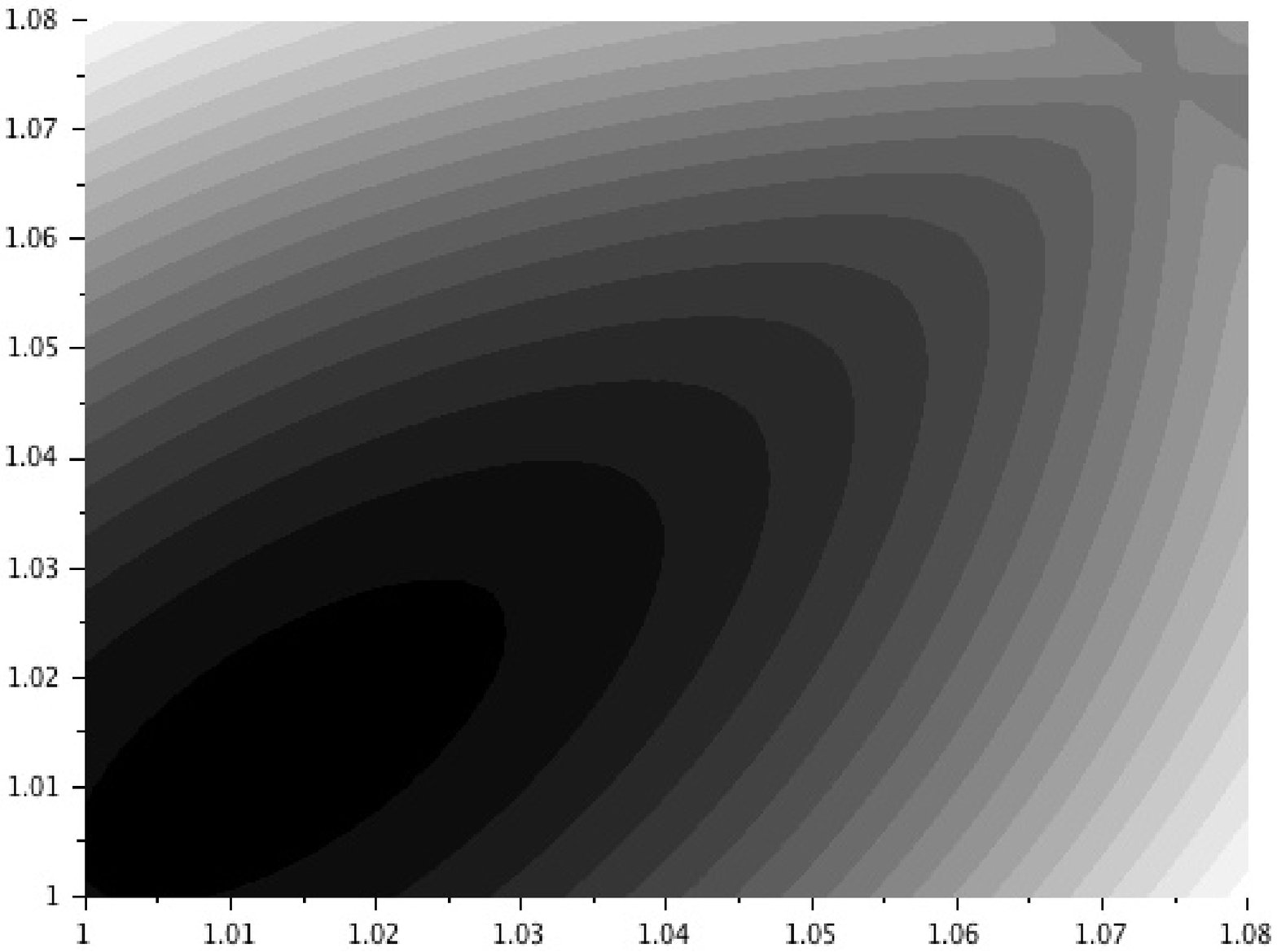} \includegraphics[width=8cm,height=70mm]{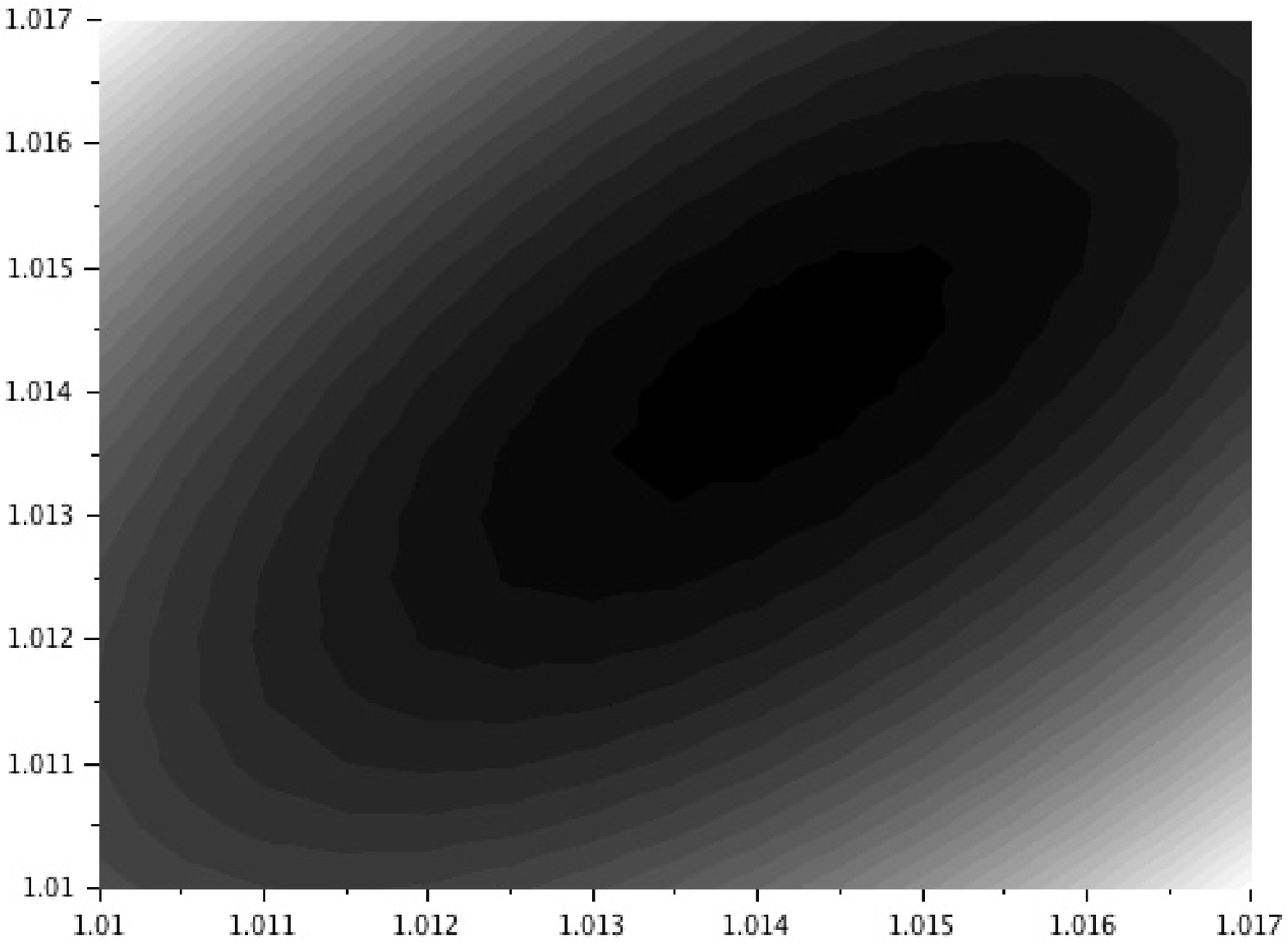}
\textbf{Fig. 2 :} \textit{Level sets of $\displaystyle (\|u\|,\|v\|)\mapsto \left(\frac{\zeta_{L}(12)-\zeta_{\Lambda_1}(12)}{2(\zeta_{L}(6)-\zeta_{\Lambda_1}(6))}\right)^{1/3}$}\\
(black = minimum, white = maximum)
\end{center}
Indeed, its minimum seems to be around lattice $L$ of area $1$ such that $\|u\|=\|v\|=1.014$ and for this one, we have $\displaystyle\left( \frac{\zeta_{L}(12)-\zeta_{\Lambda_1}(12)}{2(\zeta_{L}(6)-\zeta_{\Lambda_1}(6))}\right)^{1/3}\approx 1.1378475$, hence numerically the minimum of this function is between $1.13$ and $1.14$.\\ \\
Actually Fig. 3 gives the Lennard-Jones energy -- viewed as a function of two variables $\|u\|$ and $\|v\|$ over the lattices of area one (see \eqref{parametrize}) -- for $(\|u\|,\|v\|)\in [1,1.08]^2$. The triangular lattice $\Lambda_1$ corresponds to the point $\left(\sqrt{2/\sqrt{3}},\sqrt{2/\sqrt{3}}\right)\approx (1.075,1.075)$ and the square lattice $\Z^2$ corresponds to the point $(1,1)$. In fact it is clear that the point associated with the triangular lattice is a critical point of this energy, because the triangular lattice is the unique minimizer of Epstein zeta function among lattices of area $A$. Moreover we can prove that the square lattice is also a critical point, by using an other parametrization as $(\|u\|,\theta)$. We numerically obtain :
\begin{itemize}
\item For $A=1$, $\Lambda_1$ seems to be its minimizer and $\Z^2$ is a local maximizer.
\item For $A=1.13$, $\Lambda_1$ seems to be its minimizer but $\Z^2$ seems to be not a local maximizer.
\item For $A=1.14$, $\Z^2$ seems to be its minimizer because we estimate $E_{LJ}(\sqrt{1.14}\Lambda_1)\approx -4.435$ is larger than $E_{LJ}(\sqrt{1.14}\Z^2)\approx -4.437$ 
\item For $A=1.16$, $\Z^2$ seems to be its minimizer.
\item For $A=1.2$, $\Z^2$ seems to be its minimizer and $\Lambda_1$ is a local maximizer.
\item For $A=2$ (and more), $\Z^2$ seems to be its minimizer and $\Lambda_1$ is a local maximizer.
\end{itemize}
\noindent Hence, we can write the following conjecture based on our numerical study of $L\mapsto E_{LJ}(\sqrt{A}L)$ among all lattices with area $1$ :\\ \\
\textbf{Conjecture :} \textit{If $A$ is sufficiently large, the square lattice is the unique solution of $(P_A)$.}\\
\begin{tabular}{cc}
\includegraphics[width=8cm,height=70mm]{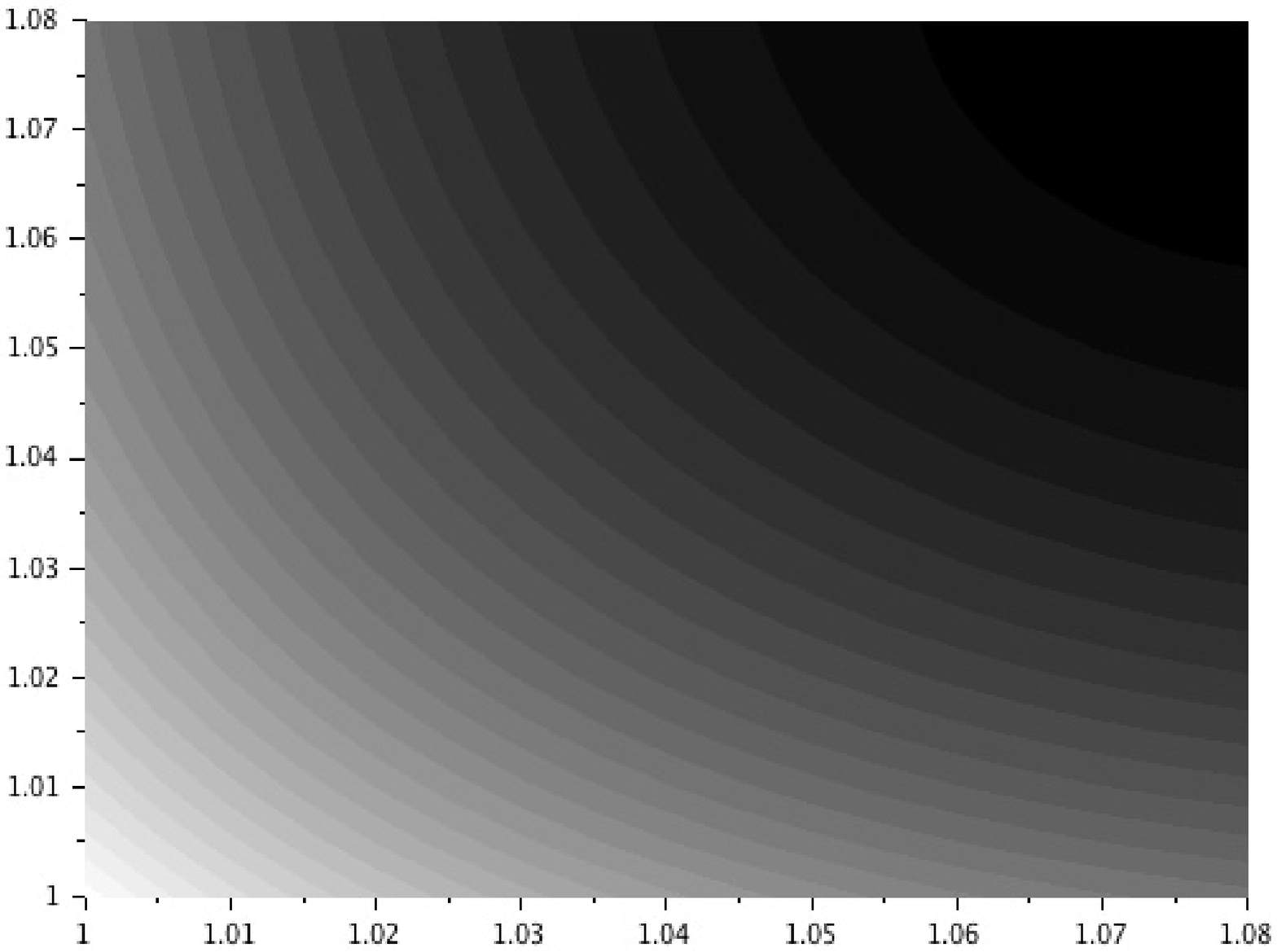} & \includegraphics[width=8cm,height=70mm]{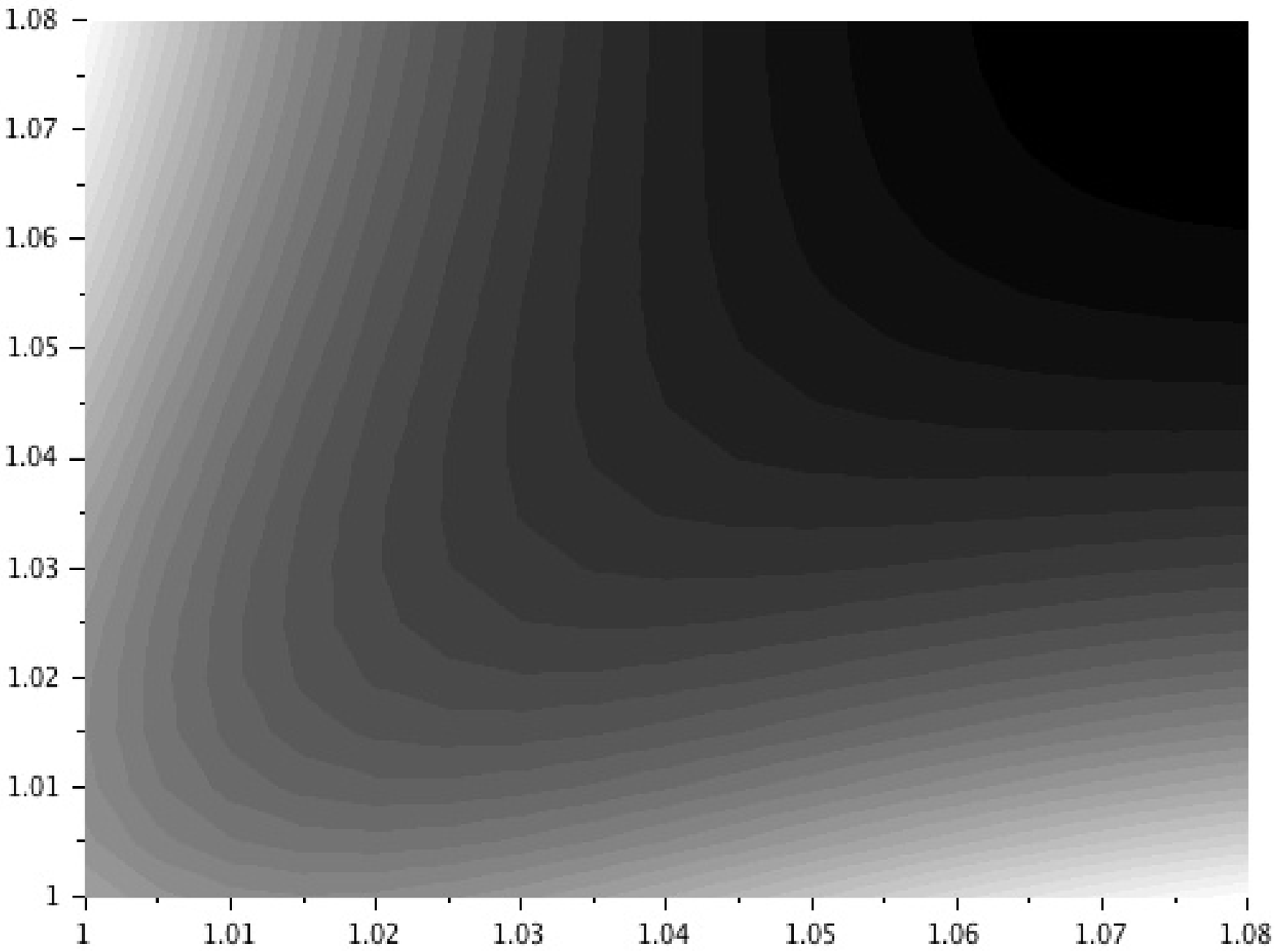} \\
A=1 & A=1.13
\end{tabular}
 \begin{tabular}{cc}
\includegraphics[width=8cm,height=70mm]{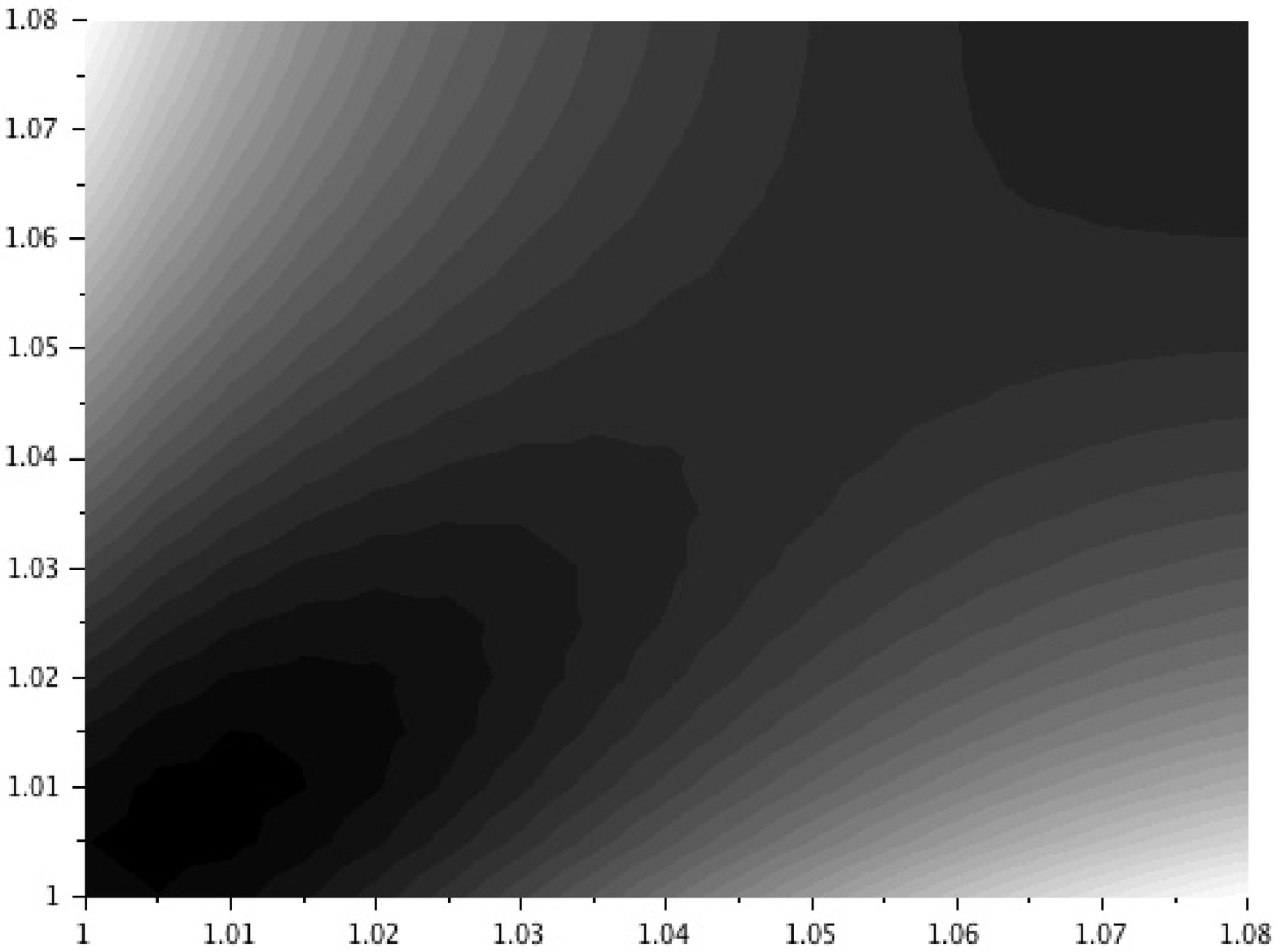} & \includegraphics[width=8cm,height=70mm]{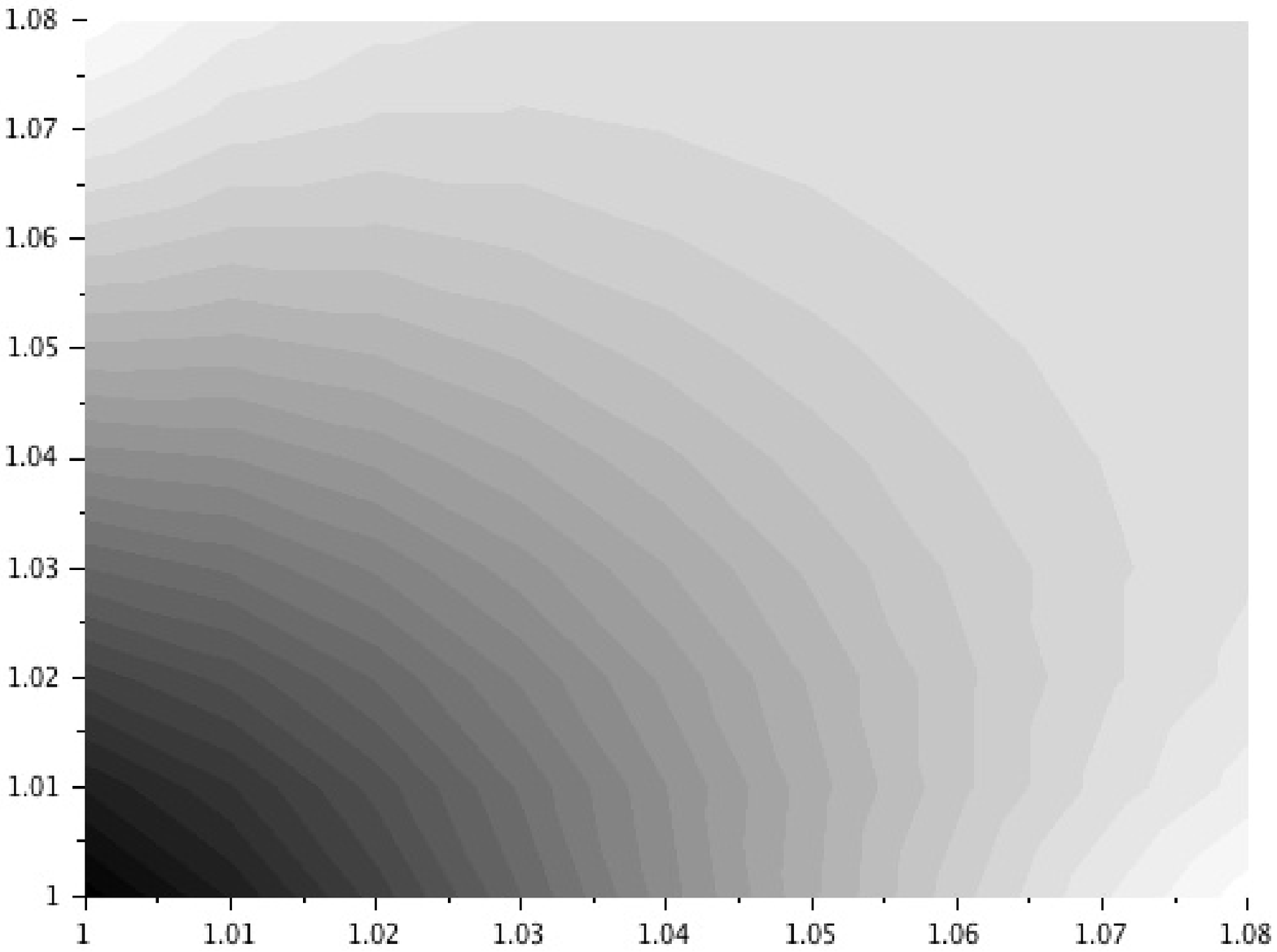} \\
A=1.14 & A=1.16
\end{tabular}
\begin{tabular}{cc}
\includegraphics[width=8cm,height=70mm]{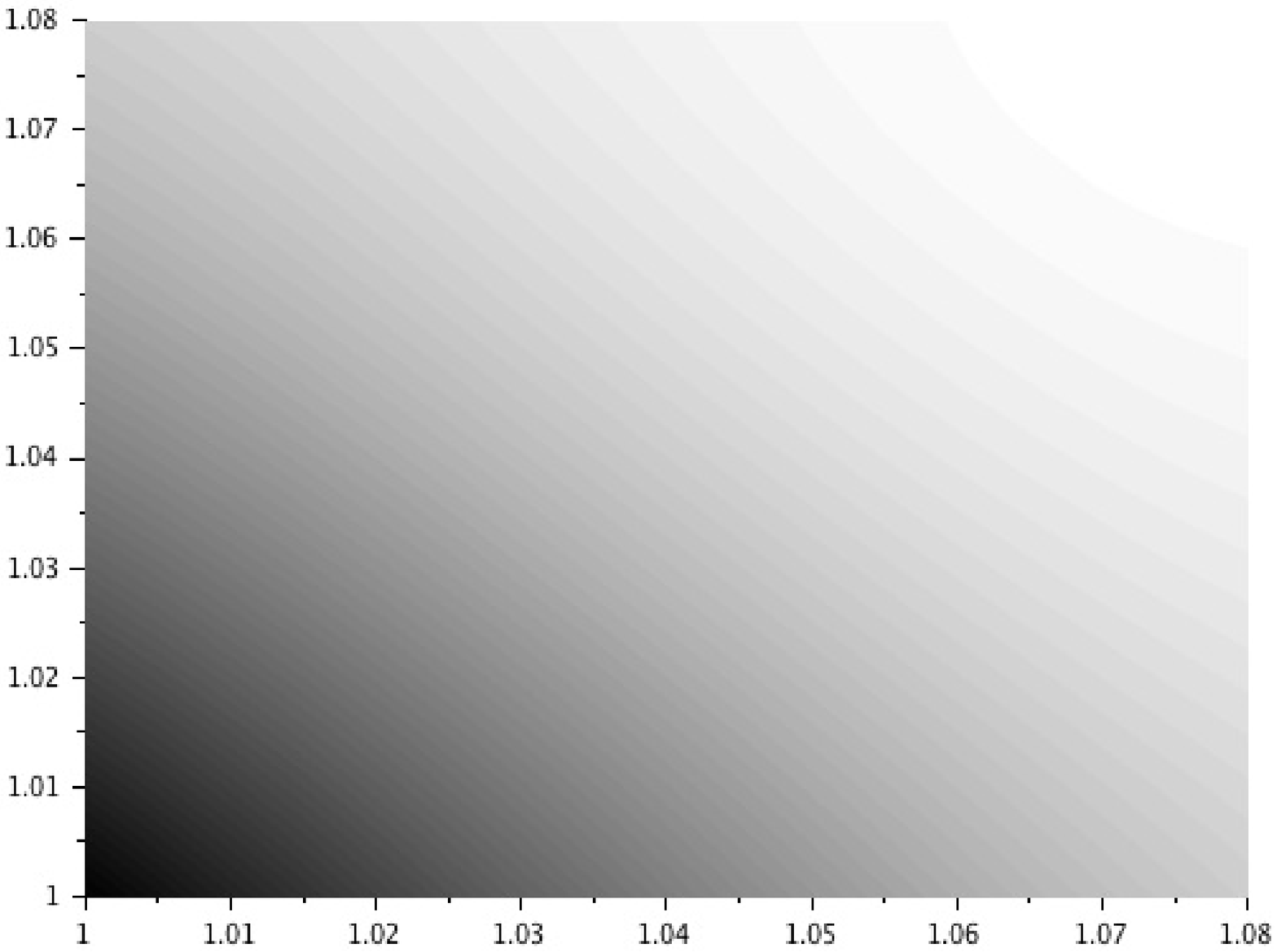} & \includegraphics[width=8cm,height=70mm]{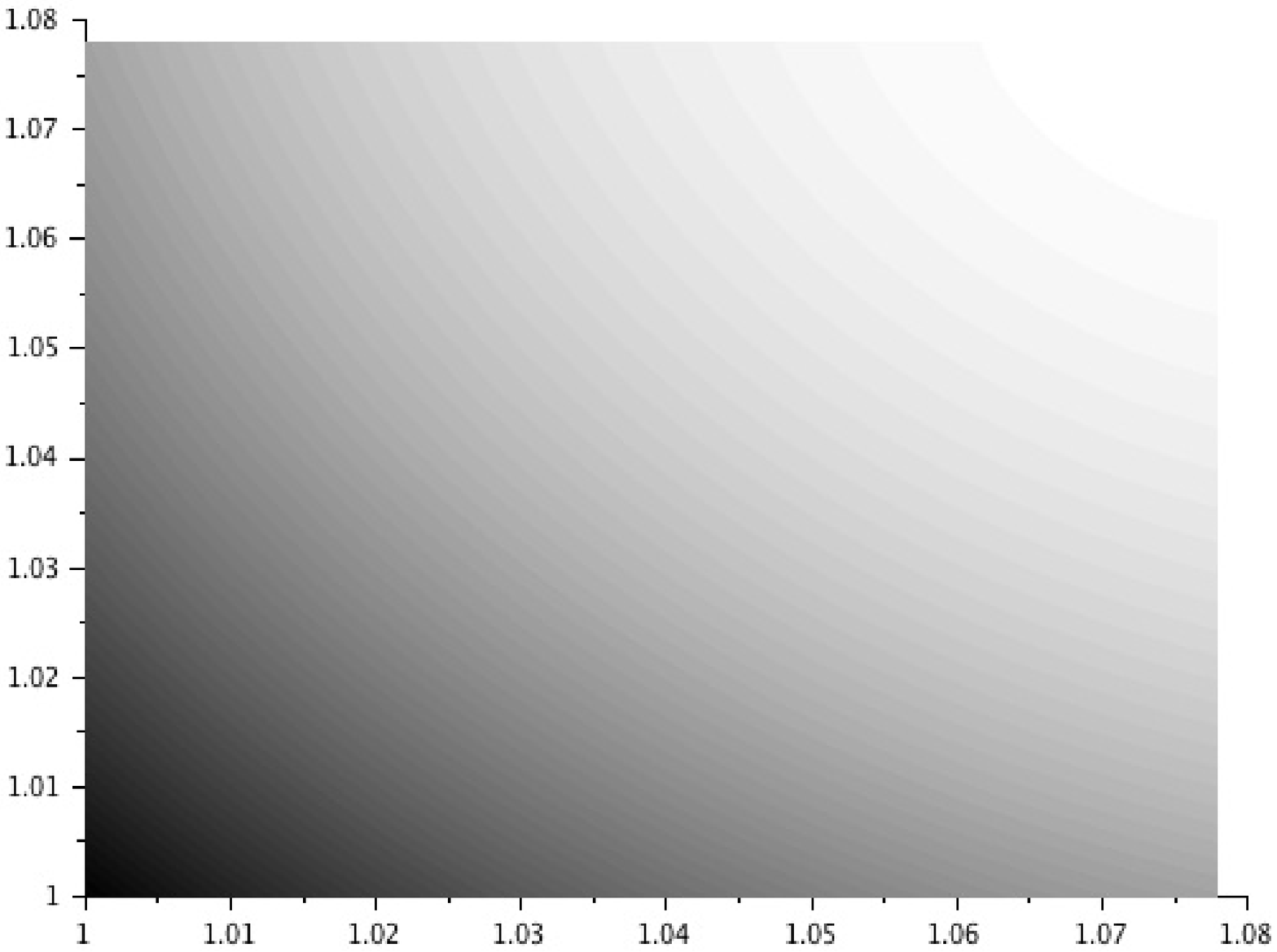}\\
A=1.2 &  A=2\\
\end{tabular}
\begin{center}
\textbf{Fig. 3 :} \textit{Level sets of $(\|u\|,\|v\|)\mapsto E_{LJ}(\sqrt{A}L)$ for some interesting values of $A$}\\
(black = minimum , white = maximum)
\end{center}
\section{Global minimization of $E_{LJ}$ among lattices}
Now we study the problem $(P)$. We give high properties for the global minimizer among lattices and some indications of its shape.
\subsection{Characterization of the global minimizer}
\begin{prop}\label{globmin}
If $L_0=\Z u\oplus \Z v$ is a solution of $(P)$ then \\
$i)$ $\displaystyle E_{LJ}(L_0)=-\zeta_{L_0}(6)=-\zeta_{L_0}(12)< 0$,\\
$ii)$ $\|u\|<1$ and $\|v\|\leq 1$,\\
$iii)$ $\zeta_{L_0}(6)=\max\{ \zeta_L(6) ; L \text{ such that } \zeta_L(12)\leq \zeta_L(6) \}$.
\end{prop}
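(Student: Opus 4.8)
The plan is to exploit the scale invariance of $(P)$ for $i)$, to deduce $ii)$ partly from $i)$ and partly from an anisotropic contraction, and to read off $iii)$ directly from global minimality.

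\emph{Part $i)$.} Since $(P)$ carries no area constraint, every dilation $tL_0$ with $t>0$ is admissible, and by the scaling relation \eqref{zetascale} one has $\zeta_{tL_0}(s)=t^{-s}\zeta_{L_0}(s)$. Writing $a:=\zeta_{L_0}(12)>0$ and $b:=\zeta_{L_0}(6)>0$, this gives
\[
E_{LJ}(tL_0)=a\,t^{-12}-2b\,t^{-6}.
\]
I would substitute $s:=t^{-6}>0$, turning the right-hand side into the upward parabola $as^2-2bs$, whose unique minimizer over $s>0$ is $s=b/a$. As the map $t\mapsto t^{-6}$ is a decreasing bijection of $(0,\infty)$, global minimality of $L_0$ forces $t=1$, i.e. $s=1$, to be this minimizer, whence $a=b$, that is $\zeta_{L_0}(12)=\zeta_{L_0}(6)$. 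Then $E_{LJ}(L_0)=a-2b=-b=-\zeta_{L_0}(6)=-\zeta_{L_0}(12)$, which is $<0$ because $\zeta_{L_0}(6)>0$.

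\emph{Part $ii)$.} For $\|u\|<1$ I would use the identity $a=b$ from $i)$, rewritten as $\sum_{x\in L_0^*}\left(\|x\|^{-12}-\|x\|^{-6}\right)=0$. If every $x\in L_0^*$ had $\|x\|\geq1$, each summand would be $\leq0$ and not all zero, contradicting the vanishing of the sum; hence some lattice vector has norm $<1$, and since $u$ is a shortest vector, $\|u\|<1$. For $\|v\|\leq1$ I would argue by contradiction via a contraction transverse to $u$. Assume $\|v\|>1$. In the reduced basis (Engel, \cite{Engel}) the vector $v$ realizes the second successive minimum, so every $x\in L_0\setminus\Z u$ satisfies $\|x\|\geq\|v\|>1$. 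Let $e_1=u/\|u\|$, let $e_2$ be a unit vector orthogonal to $e_1$, and for $\lambda>0$ let $L_\lambda$ be the image of $L_0$ under the linear map fixing $e_1$ and scaling $e_2$ by $\lambda$; then $L_1=L_0$, the vectors of $\Z u$ are unchanged, and every other lattice vector has squared norm strictly increasing in $\lambda$. By continuity of the successive minima, for $\lambda$ slightly below $1$ all vectors outside $\Z u$ still have norm $>1$, where $V_{LJ}$ is strictly increasing; since their distances strictly decrease, each corresponding term of $E_{LJ}(L_\lambda)$ strictly decreases while the $\Z u$-terms stay fixed. Hence $E_{LJ}(L_\lambda)<E_{LJ}(L_0)$, contradicting minimality, so $\|v\|\leq1$.

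\emph{Part $iii)$.} First, $L_0$ itself satisfies the constraint, since $i)$ gives $\zeta_{L_0}(12)=\zeta_{L_0}(6)$. Next, for any lattice $L$ with $\zeta_L(12)\leq\zeta_L(6)$ one has
\[
E_{LJ}(L)=\zeta_L(12)-2\zeta_L(6)\leq-\zeta_L(6),
\]
while global minimality gives $E_{LJ}(L)\geq E_{LJ}(L_0)=-\zeta_{L_0}(6)$. Combining the two inequalities yields $\zeta_L(6)\leq\zeta_{L_0}(6)$, and since $L_0$ attains this value we conclude $\zeta_{L_0}(6)=\max\{\zeta_L(6): \zeta_L(12)\leq\zeta_L(6)\}$. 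I expect parts $i)$ and $iii)$ to be immediate once \eqref{zetascale} is in hand; the delicate point is $\|v\|\leq1$ in $ii)$, where one must justify rigorously that the transverse contraction strictly lowers the energy. This rests on $v$ realizing the second successive minimum, on continuity of the successive minima to keep every vector outside $\Z u$ in the region $\{r>1\}$ on a small $\lambda$-interval, and on termwise monotonicity of the absolutely convergent lattice sum.
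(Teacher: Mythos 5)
Your proposal is correct and follows the same overall strategy as the paper: part $i)$ via dilations $tL_0$ (the paper sets $f(r)=E_{LJ}(rL_0)$ and uses $f'(1)=0$, which is your parabola argument in different clothes), part $iii)$ by the same two-line rearrangement of $E_{LJ}(L_0)\leq E_{LJ}(L)$, and part $ii)$ by the shortest-vector observation plus a contraction argument. The one genuine difference is that your treatment of $\|v\|\leq 1$ is substantially more careful than the paper's, which disposes of it in a single sentence (``a little contraction of $\R v$ yields a new lattice $L_1$ \dots because some of the distances of the lattice decrease while $\|u\|$ is constant, therefore the energy decreases''). As written, that sentence has a gap: shrinking a distance only lowers the energy where $V_{LJ}$ is increasing, i.e.\ on $(1,\infty)$, and a vector of norm $<1$ whose norm shrinks would raise its term. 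You supply exactly the missing ingredients: $\|v\|$ realizes the second successive minimum, so every $x\notin\Z u$ has $\|x\|\geq\|v\|>1$; the transverse contraction fixes $\Z u$ pointwise and strictly shrinks all other norms; and all affected vectors stay in the region of monotonicity for $\lambda$ near $1$ --- here, rather than the somewhat vague appeal to ``continuity of the successive minima'' over infinitely many vectors, the explicit uniform bound $\|T_\lambda x\|\geq\lambda\|x\|\geq\lambda\|v\|>1$ for all $x\notin\Z u$ and $\lambda\in(1/\|v\|,1)$ closes the argument cleanly. One small caution: the conditions the paper quotes from Engel ($\|u\|\leq\|v\|$ and angle in $[\pi/3,\pi/2]$) do not by themselves force $\|x\|\geq\|v\|$ for all $x\notin\Z u$; for instance $\|u\|=1$, $\|v\|=2$, angle $\pi/3$ gives $\|v-u\|=\sqrt{3}<\|v\|$. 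So you should state explicitly that one may pass, without loss of generality, to a Lagrange--Gauss (Minkowski) reduced basis, for which $\|v\|$ is indeed the second minimum; with that one sentence added, your proof is complete and in fact repairs the paper's one-line argument rather than merely reproducing it.
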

\begin{proof} 
$i)$ We consider the function $\displaystyle f(r)=E_{LJ}(rL_0)=r^{-12}\zeta_{L_0}(12)-2r^{-6}\zeta_{L_0}(6)$. As $L_0$ is a global minimizer of $E_{LJ}$, $r=1$ is the critical point of $f$ and $f'(r)=-12r^{-13}\zeta_{L_0}(12)+12r^{-7}\zeta_{L_0}(6)$, hence
$$
f'(1)=0 \iff \zeta_{L_0}(12)=\zeta_{L_0}(6)
$$
and $\displaystyle E_{LJ}(L_0)=\zeta_{L_0}(12)-2\zeta_{L_0}(6)=-\zeta_{L_0}(6)=-\zeta_{L_0}(12)$.\\
\\
$ii)$ As $\zeta_{L_0}(12)=\zeta_{L_0}(6)$, it is clear that $\|u\|< 1$ because if $r>1$ then $r^{-12}<r^{-6}$. If $\|v\|>1$, a little contraction of $\R v$ yields a new lattice $L_1$ such that $E_{LJ}(L_1)<E_{LJ}(L_0)$ because some of the distances of the lattice decrease while $\|u\|$ is constant, therefore the energy decreases.\\
$iii)$ $-\zeta_{L_0}(6)=E_{LJ}(L_0)\leq E_{LJ}(L) \iff \zeta_L(6)-\zeta_{L_0}(6)\leq \zeta_L(12)-\zeta_L(6)$ and if $L$ is a lattice such that $\zeta_L(12)\leq \zeta_L(6)$, we get $\zeta_{L}(6)\leq \zeta_{L_0}(6)$.
\end{proof}
\begin{corollary} The triangular lattice of length $1$ cannot be the solution of $(P)$ though the minimum of the potential $V_{LJ}$ is achieved for $r=1$.
\end{corollary}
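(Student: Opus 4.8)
The plan is to read the result directly off the necessary conditions recorded in Proposition \ref{globmin}. First I would pin down the meaning of ``the triangular lattice of length $1$'': by the convention of Section 2 the length of a lattice $L=\Z u\oplus\Z v$ is the norm $\|u\|$ of its shortest nonzero vector, so the triangular lattice of length $1$ is precisely the lattice $\Lambda_{\sqrt{3}/2}$, whose minimal interparticle distance equals $1$ and for which the six nearest neighbours all sit at distance $r=1$, exactly where $V_{LJ}$ attains its minimum. This is what makes the statement look surprising and is the content of the word ``though''.

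Next I would simply test this lattice against Proposition \ref{globmin}$(ii)$, which asserts that any solution $L_0=\Z u\oplus\Z v$ of $(P)$ satisfies $\|u\|<1$, i.e. its shortest bond is \emph{strictly} shorter than $1$. Since the triangular lattice of length $1$ has $\|u\|=1$, it violates this strict inequality and therefore cannot solve $(P)$. That already closes the argument in one line.

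To expose the mechanism I would also give the equivalent computation underlying part $(i)$. Writing $L$ for the triangular lattice of length $1$ and $f(r)=E_{LJ}(rL)=r^{-12}\zeta_L(12)-2r^{-6}\zeta_L(6)$, every nonzero vector has norm $\ge 1$, equal to $1$ for the nearest neighbours and $>1$ for all others; hence $\|x\|^{-12}\le\|x\|^{-6}$ term by term, with strict inequality for the infinitely many farther vectors, so $\zeta_L(12)<\zeta_L(6)$. Consequently $f'(1)=12\bigl(\zeta_L(6)-\zeta_L(12)\bigr)>0$, and a small contraction $r<1$ strictly lowers the energy: the accumulated attractive tail of $V_{LJ}$ summed over all shells outweighs the gain from placing the first shell at the potential's minimum, forcing the true optimum to be compressed below nearest-neighbour distance $1$. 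There is essentially no obstacle in this corollary; the only point deserving care is the strictness of $\zeta_L(12)<\zeta_L(6)$, which holds simply because any lattice contains vectors of norm $>1$.
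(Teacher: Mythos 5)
Your proof is correct and matches the paper's intended argument: the corollary is stated without proof precisely because it follows immediately from Proposition \ref{globmin} (the length-$1$ triangular lattice has $\|u\|=1$, violating $(ii)$, and satisfies $\zeta_L(12)<\zeta_L(6)$, violating $(i)$). Your identification of the lattice as $\Lambda_{\sqrt{3}/2}$ and the supplementary computation showing $f'(1)>0$ are both accurate and simply make explicit what the paper leaves implicit.
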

\begin{prop} \label{tri} The minimizer of $E_{LJ}$ among triangular lattices is $\Lambda_{A_0}$ such that 
$$
\displaystyle A_0=\left(\frac{\zeta_{\Lambda_1}(12)}{\zeta_{\Lambda_1}(6)}\right)^{1/3}.
$$
\end{prop}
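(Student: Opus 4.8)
The plan is to reduce the problem to a one-variable optimization using the scaling relation \eqref{zetascale}. Every triangular lattice is, up to rotation, one of the lattices $\Lambda_A$ for some area $A>0$, so minimizing $E_{LJ}$ over all triangular lattices amounts to minimizing the single-variable function $g(A):=E_{LJ}(\Lambda_A)$ over $A\in(0,\infty)$.

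First I would invoke \eqref{zetascale} to write $\zeta_{\Lambda_A}(s)=A^{-s/2}\zeta_{\Lambda_1}(s)$, which gives
$$
g(A)=\zeta_{\Lambda_A}(12)-2\zeta_{\Lambda_A}(6)=\frac{\zeta_{\Lambda_1}(12)}{A^6}-\frac{2\zeta_{\Lambda_1}(6)}{A^3}.
$$
It is convenient to substitute $t=A^{-3}>0$, turning $g$ into the quadratic $\phi(t)=\zeta_{\Lambda_1}(12)\,t^2-2\zeta_{\Lambda_1}(6)\,t$. Since $\zeta_{\Lambda_1}(12)>0$ and $\zeta_{\Lambda_1}(6)>0$ (both are sums of strictly positive terms), $\phi$ is a strictly convex parabola opening upward, hence it has a unique global minimum on $(0,\infty)$.

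Next I would locate that minimum. Solving $\phi'(t)=2\zeta_{\Lambda_1}(12)\,t-2\zeta_{\Lambda_1}(6)=0$ yields $t=\zeta_{\Lambda_1}(6)/\zeta_{\Lambda_1}(12)$, i.e. $A^{-3}=\zeta_{\Lambda_1}(6)/\zeta_{\Lambda_1}(12)$, hence
$$
A=\left(\frac{\zeta_{\Lambda_1}(12)}{\zeta_{\Lambda_1}(6)}\right)^{1/3}=A_0.
$$
Equivalently one can differentiate $g$ directly and solve $g'(A)=-6\zeta_{\Lambda_1}(12)A^{-7}+6\zeta_{\Lambda_1}(6)A^{-4}=0$, obtaining the same critical area; the convexity of $\phi$ (or the sign of $g'$, which is negative for $A<A_0$ and positive for $A>A_0$) confirms that this critical point is the global minimizer. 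Because the map $A\mapsto t=A^{-3}$ is a strictly decreasing bijection of $(0,\infty)$ onto itself, the unique minimizer of $\phi$ corresponds to a unique minimizing area, namely $A_0$.

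There is essentially no serious obstacle here: the entire content lies in the scaling relation, which is already established, after which the claim is a routine calculus exercise. The only points requiring a word of care are the strict positivity of the two zeta values (needed to ensure the parabola opens upward, so that the critical point is a genuine minimum rather than a maximum or degenerate), and the observation that the substitution $t=A^{-3}$ is a bijection, so that uniqueness in $t$ transfers back to uniqueness in the area variable $A$.
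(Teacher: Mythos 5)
Your proof is correct and follows essentially the same route as the paper: both reduce the problem via the scaling relation \eqref{zetascale} to a one-variable calculus exercise and solve the first-order condition, the only difference being that the paper parametrizes by the dilation factor $r$ (with $A_0=r_0^2$) and analyzes the sign of $f'$, while you parametrize by the area $A$ and make the cosmetic substitution $t=A^{-3}$ to exhibit a convex quadratic. Your extra remarks on positivity of the zeta values and the bijectivity of $A\mapsto A^{-3}$ are sound but amount to the same verification the paper performs implicitly.
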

\begin{proof}
As in the above proof, we define the function $f(r)=E_{LJ}(r\Lambda_1)$ and we compute its first derivative $f'(r)=-12r^{-13}\zeta_{\Lambda_1}(12)+12r^{-7}\zeta_{\Lambda_1}(6)$. It follows that :
$$
f'(r)\geq 0 \iff r\geq \left(\frac{\zeta_{\Lambda_1}(12)}{\zeta_{\Lambda_1}(6)}\right)^{1/6}=:r_0
$$
hence $\Lambda_{A_0}=r_0\Lambda_1$, with $\displaystyle A_0=r_0^2=\left(\frac{\zeta_{\Lambda_1}(12)}{\zeta_{\Lambda_1}(6)}\right)^{1/3}$, is the minimizer of $E_{LJ}$ among all triangular lattices.
\end{proof}
\begin{remark} We compute $A_0\approx 0.84912$, therefore the length of this lattice is $\|u\| \approx 0.99019$. Moreover we notice that $\displaystyle E_{LJ}(\Lambda_{A_0})=-\zeta_{\Lambda_{A_0}}(6)\approx -6.76425$ (it will be useful for the next part).\\ 
Because $A_0>0.63692$, Theorem \ref{ThM} is not sufficient to prove that $\Lambda_{A_0}$ is the solution of $(P)$ but a numerical investigation of $L\mapsto E_{LJ}(\sqrt{A_0}L)$ among all lattices of area $1$ seems to indicate that the solution of $(P_{A_0})$ is triangular and unique.
\begin{center}
\includegraphics[width=8cm,height=70mm]{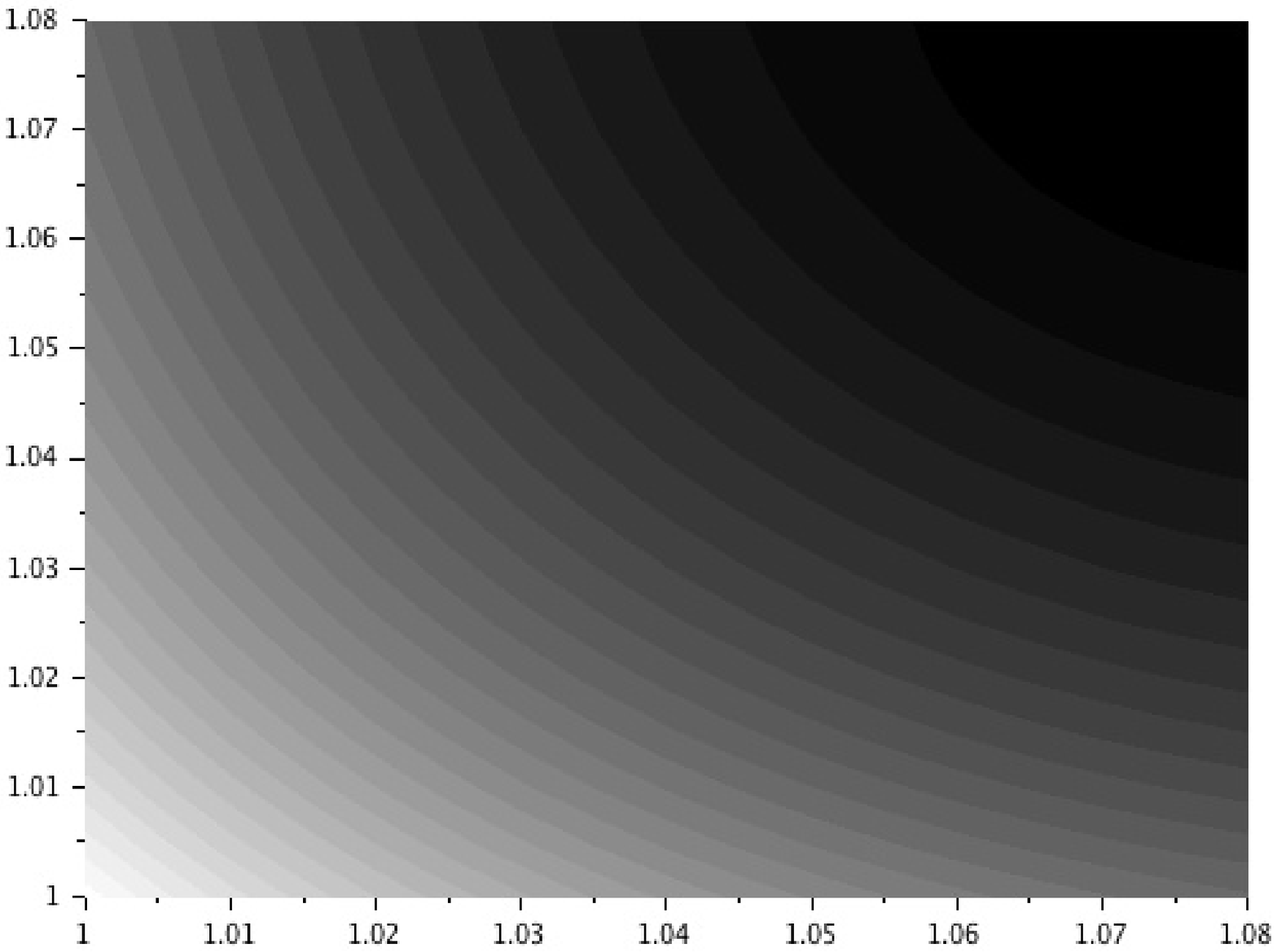}\\
\textbf{Fig. 4 : }\textit{Level sets of $(\|u\|,\|v\|)\mapsto E(\sqrt{A_0}L)$}\\
(black = minimum, white = maximum)
\end{center}

\noindent Moreover it is not difficult to prove numerically that $\Lambda_{A_0}$ is a local minimizer among all lattices. Hence we can write the following conjecture for this problem :\\ \\
\textbf{Conjecture :} \textit{The triangular lattice $\Lambda_{A_0}$ is the unique solution of $(P)$.}

\end{remark}

\subsection{Minimum length of the global minimizer}
Because our method does not show that the triangular lattice of area $A_0$ is the global minimizer of the Lennard-Jones energy among lattices, we use Blanc's proof, from \cite{BL1}, in order to find a lower bound for the minimal distance in the globally minimizing lattice. His result was for the Lennard-Jones interaction of $N$ points in $\R^2$ and $\R^3$. Xue in \cite{Xue} and Schachinger, Addis, Bomze and Schoen in \cite{26116229} improved this. We use Blanc's method because it is well suited to our problem.
\begin{prop}
If $L_0=\Z u \oplus \Z v$ is a solution of $(P)$, then the minimal distance is greater than an explicit constant $c$. Furthermore, we have $c> 0.74035$.
\end{prop}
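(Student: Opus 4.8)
The plan is to show that a small minimal distance is incompatible with the optimality of $L_0$. Write $\ell:=\|u\|$ for the minimal distance, and recall from Proposition \ref{globmin} that at a global minimizer the scaling stationarity forces $\zeta_{L_0}(12)=\zeta_{L_0}(6)$, that $\ell<1$ and $\|v\|\leq 1$, and that $E_{LJ}(L_0)=-\zeta_{L_0}(6)\leq E_{LJ}(\Lambda_{A_0})\approx -6.76425$. The idea (following Blanc's scheme of \cite{BL1}) is that a short lattice vector produces a huge repulsive $r^{-12}$ contribution, which through the stationarity identity $\zeta_{L_0}(12)=\zeta_{L_0}(6)$ must be matched by the purely attractive sum $\zeta_{L_0}(6)$; since the latter can be controlled in terms of $\ell$, one gets an inequality in the single variable $\ell$ whose solution is bounded below.

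Concretely, I would first bound the repulsive part from below by keeping only the two shortest vectors $\pm u$, namely $\zeta_{L_0}(12)\geq 2\ell^{-12}$. Next I would bound the attractive sum $\zeta_{L_0}(6)$ from above. Working in the reduced basis of shortest vectors, so that the associated form $Q_{L_0}$ satisfies $|2\,u\cdot v|\leq \|u\|^2\leq \|v\|^2$, its smallest eigenvalue obeys $\lambda_{\min}\geq \ell^2/2$ (with equality only for the hexagonal lattice), whence $Q_{L_0}(m,n)\geq \tfrac{\ell^2}{2}(m^2+n^2)$ and
\[
\zeta_{L_0}(6)\;\leq\;\frac{8}{\ell^6}\sum_{(m,n)\neq(0,0)}\frac{1}{(m^2+n^2)^3}\;=\;\frac{8}{\ell^6}\,\zeta_{\Z^2}(6).
\]
Chaining this with $2\ell^{-12}\leq \zeta_{L_0}(12)=\zeta_{L_0}(6)$ gives $\ell^6\geq\bigl(4\zeta_{\Z^2}(6)\bigr)^{-1}$, i.e. $\ell\geq\bigl(4\zeta_{\Z^2}(6)\bigr)^{-1/6}\approx 0.614$. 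This skeleton is rigorous but not yet enough.

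The hard part, and the crux of reaching the stated threshold, is to sharpen the upper bound on $\zeta_{L_0}(6)$: the naive constant $8\zeta_{\Z^2}(6)\approx 37$ must be pushed down to roughly $12$, since a bound $\zeta_{L_0}(6)\leq C\ell^{-6}$ yields $\ell\geq(2/C)^{1/6}$, and $C\approx 12$ is what produces $c>0.74035$. The comparison $Q_{L_0}\geq\tfrac{\ell^2}{2}(m^2+n^2)$ is wasteful on the outer shells, so I would instead evaluate the first shell (at most six vectors of length $\ell$) exactly and estimate the tail $\sum_{\|x\|\geq c_0\ell}\|x\|^{-6}$ by an integral/packing comparison, using the covolume estimate $|L_0|\geq\frac{\sqrt 3}{2}\ell^2$ together with the constraint $\|v\|\leq 1$, which forbids the lattice from being simultaneously thin and of small covolume. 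The main obstacle is exactly this quantitative control of the attractive sum, uniform over the admissible domain $\{\ell\leq\|v\|\leq 1,\ \theta\in[\pi/3,\pi/2]\}$; once such a bound is secured, the identity $\zeta_{L_0}(12)=\zeta_{L_0}(6)$ closes the argument and a direct numerical evaluation gives $\ell>0.74035$.
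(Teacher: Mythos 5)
Your rigorous skeleton is correct as far as it goes: for a reduced basis the comparison $Q_{L_0}(m,n)\geq\frac{\ell^2}{2}(m^2+n^2)$ is valid, and the chain $2\ell^{-12}\leq\zeta_{L_0}(12)=\zeta_{L_0}(6)\leq 8\,\zeta_{\Z^2}(6)\,\ell^{-6}$ indeed yields $\ell\geq\bigl(4\zeta_{\Z^2}(6)\bigr)^{-1/6}\approx 0.614$. But the statement to be proved is $c>0.74035$, and everything in your text beyond $0.614$ is a plan rather than a proof: you yourself identify the uniform bound $\zeta_{L_0}(6)\leq C\,\ell^{-6}$ with $C\approx 12.1$ as ``the main obstacle'' and close with ``once such a bound is secured.'' That bound is not secured, and it is not routine. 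The first shell alone can contribute $6\ell^{-6}$ (six minimal vectors, as in the near-hexagonal case), so the whole remainder of the lattice must be shown to contribute at most about $6.1\,\ell^{-6}$, uniformly over the admissible domain. The crude annulus/packing comparison you sketch does not obviously deliver this: a packing bound by disjoint disks of radius $\ell/2$ admits on the order of a dozen lattice points of norm barely above $\ell$, each worth nearly $\ell^{-6}$, which already overshoots the budget; ruling such configurations out requires a genuine shell analysis in the reduced basis --- essentially the computation that Blanc carries out in \cite{BL1}. So, as written, your argument proves a strictly weaker constant and leaves the decisive quantitative step open.

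For comparison, the paper does not use the stationarity identity $\zeta_{L_0}(12)=\zeta_{L_0}(6)$ at all here. It imports Blanc's ready-made inequality $E_{LJ}(L_0)\geq V_{LJ}(\|u\|)-23+\|u\|^{-12}P-\|u\|^{-6}Q$, with $P=\sum_{k\geq2}(16k+8)k^{-12}$ and $Q=\sum_{k\geq2}(32k+16)k^{-6}$, combines it with the test-lattice upper bound $E_{LJ}(L_0)\leq E_{LJ}(\Lambda_{A_0})=-\zeta_{\Lambda_{A_0}}(6)\approx-6.76425$ from Proposition \ref{tri}, and solves the resulting quadratic inequality $(P+1)t^2-(Q+2)t-23+\zeta_{\Lambda_{A_0}}(6)\leq 0$ in $t=\|u\|^{-6}$, which gives $c>0.74035$. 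Two points of contrast are worth noting. First, your chain never actually exploits the comparison with $\Lambda_{A_0}$, even though you state it; in the paper's argument that comparison is essential, and the additive constant $-23$ in Blanc's bound is precisely what absorbs the near shells that defeat your crude tail estimate, so that only the explicitly summable series $P$ and $Q$ remain. Second, if you wish to complete your stationarity-based route, the missing piece is exactly an analogue of Blanc's shell estimates; without it, the approach caps out at roughly $0.614$.
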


\begin{proof}
In \cite{BL1}, Blanc proved that
$$ E_{LJ}(L_0)\geq V_{LJ}(\|u\|)-23+\frac{1}{\|u\|^{12}}\sum_{k\geq 2}\frac{16k+8}{k^{12}}-\frac{1}{\|u\|^6}\sum_{k\geq 2}\frac{32k+16}{k^6}.$$
As we have $\displaystyle E_{LJ}(L_0)\leq E_{LJ}(\Lambda_{A_0})=-\zeta_{\Lambda_{A_0}}(6)$ we obtain
$$
23-\zeta_{\Lambda_{A_0}}(6)\geq \frac{P+1}{\|u\|^{12}}-\frac{Q+2}{\|u\|^6}.
$$
with $\displaystyle P:=\sum_{k\geq 2}\frac{16k+8}{k^{12}}$ and $\displaystyle Q:=\sum_{k\geq 2}\frac{32k+16}{k^6}$.\\
Now, setting $t=\|u\|^{-6}$, we have $\displaystyle (P+1)t^2-(Q+2)t-23+\zeta_{\Lambda_{A_0}}(6)\leq 0$ which implies 
$$
t\leq \frac{Q+2+\sqrt{(Q+2)^2+4(23-\zeta_{\Lambda_{A_0}}(6))(P+1)}}{2(P+1)}
$$
and we obtain
$$
\|u\|\geq \left( \frac{2(P+1)}{Q+2+\sqrt{(Q+2)^2+4\left(23-\zeta_{\Lambda_{A_0}}(6)\right)(P+1)}}  \right)^{1/6}=:c.
$$
Since $P\approx 0.00988$, $Q\approx 1.45918$ and $\zeta_{\Lambda_{A_0}}(6)\approx 6.76425$ we get $c> 0.74035$.
\end{proof}
\begin{remark}
As we think that $\Lambda_{A_0}$ is the unique solution of $(P)$, this lower bound is the best that we can find with this method. Moreover, this bound and the second point of Proposition \ref{globmin} imply that $0.47468< |L_0|<1$.
\end{remark}

\section{The Thomas-Fermi model in $\R^2$}
In Thomas-Fermi's model for interactions in a solid, we consider $N$ nuclei at positions $X_N=(x_1,...,x_N)$, with for any $1\leq i \leq N$, $x_i\in \R^2$ , associated with $N$ electrons with total density $\rho\geq 0$. Then the Thomas-Fermi energy is given by
\begin{align*}
E^{TF}(\rho,X_N)=&\int_{\R^2}\rho^2(x)dx-\frac{1}{2}\iint_{\R^2\times \R^2}\log\|x-y\|\rho(x)\rho(y)dxdy\\
& +\sum_{j=1}^N\int_{\R^2}\log\|x-x_j\|\rho(x)dx-\frac{1}{2}\sum_{j\neq k}\log\|x_j-x_k\|.
\end{align*}
To introduce this kind of model property in quantum chemistry, refer to \cite{MathQuantique}. Because the system is neutral, the number of electrons is exactly $N$ and we study the minimization problem $\displaystyle I_N^{TF}=\inf_{X_N}\{E^{TF}(X_N)\}$ where 
$$
E^{TF}(X_N):=\inf_\rho\left\{ E^{TF}(\rho,X_N),\rho\geq 0,\rho\in L^1(\R^2)\cap L^2(\R^2),\int_{\R^2}\rho=N\right\}.
$$
By the Euler-Lagrange equations for this minimization problem, we find -- as it is explained in Section $2$ of \cite{YBLB} and Section $4$ of \cite{BLL1} -- that the minimizer $\bar{\rho}$ is the solution of 
$$
 -\Delta\bar{\rho}+\pi\bar{\rho}=\pi\sum_{j=1}^N\delta_{x_j}.
$$
It is known that the fundamental solution of the modified Helmholtz equation $-\Delta h +h=\delta_0$ -- also called ``screened Poisson equation" -- which goes to $0$ at infinity, is the radial modified Bessel function of the second kind, also called the Yukawa potential, defined in \cite{Table} and \cite{watson}, by
$$
K_0(\|x\|)=\int_0^{+\infty}e^{-\|x\|\cosh t}dt.
$$
Therefore we obtain $\displaystyle \bar{\rho}(x)=\pi\sum_{j=1}^NW_{TF}(\|x-x_j\|)$ where $\displaystyle W_{TF}(\|x\|)=\frac{1}{2}K_0(\sqrt{\pi}\|x\|)$ and finally
$$
E^{TF}(X_N)=\sum_{i\neq j}W_{TF}(\|x_i-x_j\|)+NC
$$
where $C$ is a constant independent of $N$ and $X_N$.
Now, if we consider that the nuclei are in lattice $L$, we can study, by taking the mean value of the total energy, the following energy per point 
$$
E_{TF}(L)=\sum_{x\in L^*}W_{TF}(\|x\|).
$$
\begin{remark}This potential $W_{TF}$ decreases. Therefore, it is obvious that the right problem is to minimize this energy among lattices only with a fixed area. We notice that $W_{TF}(\sqrt{.})$ is not completely monotonic on $\R^*_+$, i.e. $(-1)^n(W_{TF}(\sqrt{.}))^{(n)}(r)$ is not positive for any $n\geq 0$ and any $r>0$. Otherwise, it is explained in \cite{CohnKumar}, by using Bernstein's Theorem (see Theorem 12b of \cite{Widder}) about the following representation of a completely monotonic function $f$ 
$$
f(r)=\int_0^{+\infty}e^{-rt}d\alpha(t)
$$
where $\alpha$ is a non decreasing function, and Montgomery's Theorem \ref{Mgt} for theta functions, that the triangular lattice is the unique minimizer among lattices of $\displaystyle E_f(L):=\sum_{x\in L^*}f(\|x\|^2)$, provided we have the correct assumptions of convergence, for instance $f(r)=O(r^{-1-\eta})$ at infinity for some $\eta>0$. Nevertheless, a simple idea enables us to use theta functions  and we have the following result :
\end{remark}
\begin{thm}
$\Lambda_A$ is the unique minimizer of $E_{TF}$ among all lattices of fixed area $A$.
\end{thm}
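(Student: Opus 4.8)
The plan is to bypass the (claimed) failure of complete monotonicity by exploiting the Gaussian heat-kernel integral representation of the modified Bessel function $K_0$, which turns $E_{TF}$ into a positively weighted superposition of theta functions to which Montgomery's Theorem \ref{Mgt} applies pointwise. Concretely, I would start from the classical formula
$$
K_0(z)=\frac{1}{2}\int_0^{+\infty}e^{-t-\frac{z^2}{4t}}\,\frac{dt}{t},\qquad z>0,
$$
and substitute $z=\sqrt{\pi}\|x\|$ to obtain, for every $x\neq 0$,
$$
W_{TF}(\|x\|)=\frac{1}{2}K_0(\sqrt{\pi}\|x\|)=\frac{1}{4}\int_0^{+\infty}e^{-t}\,e^{-\frac{\pi\|x\|^2}{4t}}\,\frac{dt}{t}.
$$
The crucial observation is that the weight $t\mapsto e^{-t}/t$ is \emph{strictly positive} on $(0,+\infty)$; this is precisely the ``simple idea'' that replaces the completely monotone representation demanded by the Cohn--Kumar framework.

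Next I would sum over $x\in L^*$ and interchange the sum and the integral. Since every term is positive, Tonelli's theorem justifies the exchange, and recalling that $\theta_L(\alpha)=\sum_{x\in L}e^{-2\pi\alpha\|x\|^2}$ with the matching $2\pi\alpha=\pi/(4t)$, i.e. $\alpha=1/(8t)$, I would reach the representation
$$
E_{TF}(L)=\frac{1}{4}\int_0^{+\infty}\frac{e^{-t}}{t}\left(\theta_L\!\left(\frac{1}{8t}\right)-1\right)dt.
$$
Here I must check finiteness of the right-hand side: near $t=0$ the factor $\theta_L(1/(8t))-1$ decays faster than any power, since the shortest nonzero vector contributes $e^{-\pi r_{\min}^2/(4t)}$; near $t=+\infty$ one uses the asymptotics $\theta_L(1/(8t))-1=O(t)$ coming from $\theta_L(\alpha)\sim (2\alpha A)^{-1}$ as $\alpha\to 0^+$, which together with the factor $e^{-t}/t$ is integrable. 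Equivalently, finiteness is inherited directly from the exponential decay of $K_0$.

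The conclusion is then immediate. For fixed area $A$ and every $t>0$, Theorem \ref{Mgt} gives $\theta_L(1/(8t))\geq \theta_{\Lambda_A}(1/(8t))$, with equality if and only if $L=\Lambda_A$ up to rotation. Integrating the nonnegative integrand against the strictly positive weight $e^{-t}/t$ yields
$$
E_{TF}(L)-E_{TF}(\Lambda_A)=\frac{1}{4}\int_0^{+\infty}\frac{e^{-t}}{t}\left(\theta_L\!\left(\frac{1}{8t}\right)-\theta_{\Lambda_A}\!\left(\frac{1}{8t}\right)\right)dt\geq 0,
$$
so $\Lambda_A$ is a minimizer. For uniqueness I would note that equality forces the nonnegative integrand to vanish for almost every $t$, hence, by continuity of $t\mapsto\theta_L(1/(8t))$, for every $t>0$; Montgomery's uniqueness clause then gives $L=\Lambda_A$ up to rotation.

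I expect the only genuinely technical points to be the rigorous justification of the Fubini/Tonelli interchange together with the convergence discussion above, and pinning down the constants in the integral representation of $K_0$ (and the rescaling $\alpha=1/(8t)$). Once these are settled, the minimization is a one-line consequence of Theorem \ref{Mgt}: the positivity of the weight $e^{-t}/t$ is exactly what makes the pointwise inequality $\theta_L\geq\theta_{\Lambda_A}$ integrate to the desired global inequality.
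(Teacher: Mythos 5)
Your proposal is correct and follows essentially the same route as the paper: the same Gaussian (heat-kernel) integral representation of $K_0$, the same Tonelli interchange turning $E_{TF}$ into an integral of $\theta_L$ against the positive weight $e^{-t}/t$, and the same pointwise application of Montgomery's theorem (you keep the factor $\sqrt{\pi}$ and use $\alpha=1/(8t)$ where the paper first rescales to $K_0(\|x\|)$ and uses $\alpha=1/(8\pi y)$, a cosmetic difference). If anything, your explicit almost-everywhere-plus-continuity argument for uniqueness is slightly more careful than the paper's, which asserts uniqueness from the pointwise strict minimality without spelling this step out.
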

\begin{proof}
This problem is equivalent to finding the minimizer of $\displaystyle \sum_{x\in L^*}K_0(\|x\|)$ among lattices with a fixed area. We put $y=\frac{1}{2}\|x\|e^t$ for $x\neq 0$ in the integral formula for $K_0(\|x\|)$ :
\begin{align*}
K_0(\|x\|)=\frac{1}{2}\int_{-\infty}^{+\infty}e^{-\|x\|\cosh t}dt&=\frac{1}{2}\int_0^{+\infty}e^{-\|x\|\cosh\left(\ln(2y/\|x\|)\right)}\frac{dy}{y}\\
&=\frac{1}{2}\int_0^{+\infty}e^{-y-\frac{\|x\|^2}{4y}}\frac{dy}{y}\\
&=\frac{1}{2}\int_0^{+\infty}e^{-\frac{\|x\|^2}{4y}}e^{-y}\frac{dy}{y}.
\end{align*}
Now, for any $y>0$ and any lattice $L$ of area $A$, we obtain $\displaystyle \sum_{x\in L^*}e^{-\frac{\|x\|^2}{4y}}=\theta_L\left(\frac{1}{8\pi y}\right)-1$. Hence, by Montgomery's theorem, the triangular lattice $\Lambda_A$ minimizes $\theta_L(\alpha)$ for any $\alpha>0$, and it is the unique minimizer of $L\mapsto \theta_L(\alpha)$ among all Bravais lattices with a fixed area $A$.\\
Therefore, for any $y>0$, $\Lambda_A$ is the unique minimizer of the energy $\displaystyle E_y(L):=\sum_{x\in L^*}e^{-\frac{\|x\|^2}{4y}}$ among lattices with a fixed area A. Now it is clear, because $E_y(\Lambda_A)\leq E_y(L)$ for any $y>0$ and for any lattice $L$ with area $A$, that
$$
\frac{1}{2}\int_0^{+\infty}E_y(\Lambda_A)e^{-y}\frac{dy}{y}\leq \frac{1}{2}\int_0^{+\infty}E_y(L)e^{-y}\frac{dy}{y}.
$$
Hence, for any $L$ of a fixed area $A$ : $\displaystyle E_{TF}(\Lambda_A)=\sum_{x\in \Lambda_A^*}W_{TF}(\|x\|)\leq \sum_{x\in L^*}W_{TF}(\|x\|)=E_{TF}(L)$.
\end{proof}
\begin{remark}
The Yukawa potential appears in many vortex interaction models, as the $\alpha$-model in fluid mechanics and in superconductivity (see for example \cite{Abrikosov} and \cite{11101042}). Indeed, the second author recently studied, in \cite{Zhang}, Ginzburg-Landau's model for the interactions between vortices in superconductors. He proved, by using a more general method -- that it can certainly be used for other potentials -- the same result was obtained for minimality of the triangular lattice among all lattices with fixed density. The use of results from Number Theory in Ginzburg-Landau's models for vortices can also be seen in \cite{Sandier_Serfaty}.
\end{remark}
\noindent\textbf{Acknowledgements:} We are grateful to Etienne Sandier, Xavier Blanc, Yuxin Ge and Henry Cohn for their interest and  helpful discussions. We also want to thank our colleague Chieh-Lei Wong for his remarks on the first version of this paper.
\bibliographystyle{plain}
\bibliography{biblio}
\vspace{3mm}
\noindent LAURENT B\'{E}TERMIN, e-mail: \textbf{laurent.betermin@u-pec.fr} \\
\\
\noindent PENG ZHANG, e-mail : \textbf{peng.zhang@univ-paris-est.fr} \\

\noindent UNIVERSIT\'{E} PARIS-EST CR\'{E}TEIL,\\
LAMA - CNRS UMR 8050,\\
61, Avenue du G\'{e}n\'{e}ral de Gaulle, 94010 Cr\'{e}teil. France\\

\end{document}